\newtheorem{theorem}{Theorem}[section]
\newtheorem{lemma}[theorem]{Lemma}
\theoremstyle{definition}
\newtheorem{definition}[theorem]{Definition}
\theoremstyle{remark}
\newtheorem{remark}[theorem]{Remark}
\numberwithin{equation}{section}
\numberwithin{equation}{section}
\begin{document}

 \begin{frontmatter}

	\title{Elliptic problem in an exterior domain driven by a singularity with a nonlocal Neumann condition}

\author{D. Choudhuri\fnref{label2}}
\ead{dc.iit12@gmail.com}
\author{K. Saoudi\corref{cor1}\fnref{label3}}
\ead{kasaoudi@gmail.com}
%
\cortext[cor1]{Corresponding author}
\fntext[label2]{Department of Mathematics, National Institute of Technology Rourkela, Rourkela - 769008, India}
\fntext[label3]{Basic and Applied Scientific Research Center, Imam Abdulrahman Bin Faisal University,  Saudi Arabia}


\begin{abstract}
We prove the existence of ground state solution to the following problem.   
\begin{align*}
(-\Delta)^{s}u+u&=\lambda|u|^{-\gamma-1}u+P(x)|u|^{p-1}u,~\text{in}~\mathbb{R}^N\setminus\Omega\\
N_su(x)&=0,~\text{in}~\Omega
\end{align*}
where $N\geq2$, $\lambda>0$, $0<s,\gamma<1$, $p\in(1,2_s^*-1)$ with $2_s^*=\frac{2N}{N-2s}$.
 Moreover, $\Omega\subset\mathbb{R}^N$ is a smooth bounded domain, $(-\Delta)^s$ denotes the $s$-fractional Laplacian and finally $N_s$ denotes the nonlocal operator that describes the Neumann boundary condition which is given as follows.
\begin{align*}
N_{s}u(x)&=C_{N,s}\int_{\mathbb{R}^N\setminus\Omega}\frac{u(x)-u(y)}{|x-y|^{N+2s}}dy,~x\in\Omega.
\end{align*}
We further establish the existence of infinitely many bounded solutions to the problem.
	\begin{flushleft}
		{\bf Keywords}:~  Fractional Laplacian, Variable Order Fractional Sobolev Space, Kirchhoff operator, ground state solution, Singularity.\\
		{\bf AMS Classification}:~35R11, 35J75, 35J60, 46E35.
	\end{flushleft}
\end{abstract}

\end{frontmatter}


\section{Introduction}
As mentioned in the Abstract, we will take up the following problem to study.
\begin{align}\label{main}\tag{{\bf P}}
\begin{split}
(-\Delta)^{s}u+u&=\lambda|u|^{-\gamma-1}u+P(x)|u|^{p-1}u,~\text{in}~\mathbb{R}^N\setminus\Omega\\
N_su(x)&=0,~\text{in}~\Omega.
\end{split}
\end{align}
The function $P$ is a continuous function such that $$(P_1):~~~~~~ P(x)\geq\tilde{P}>0~\text{in}~\mathbb{R}^N\setminus\Omega$$ 
and $$\underset{|x|\rightarrow\infty}{\lim}P(x)=\tilde{P}.$$
The ``nonlocal normal derivative'' $N_{s}$ was first introduced by Dipierro et al. \cite{dip1}. The authors in \cite{dip1} proved that as $s\rightarrow 1^-$, the classical Neumann boundary condition is recovered in the following sense. 
\begin{eqnarray}
\underset{s\rightarrow 1^-}{\lim}\int_{\mathbb{R}^N\setminus\Omega}vN_su&=&\int_{\partial\Omega}v\frac{\partial u}{\partial\nu}\nonumber
\end{eqnarray}
where $\nu$ is an outward drawn normal to the boundary $\partial\Omega$. Elliptic problems considered in a exterior domain is a rarity in the literature. However, when we traced through the literature pertaining to the exterior domain problem, we found a few seminal works. One of them is due to Benci and Cerami \cite{B_and_C} who considered the problem \eqref{main} with $s=1$, $V(x)\equiv 1$, $\lambda=0$, $Q(x)\equiv 1$ and with a zero Dirichlet boundary condition. The authors in \cite{B_and_C} showed that it does not have a ground state solution. Thanks to the article due to Esteban \cite{Est1} who proved that the same problem with Neumann condition has a ground state solution to the following problem.
\begin{align}\label{esteban}
\begin{split}
-\Delta u+u&=|u|^{p-1}u,~\text{in}~\mathbb{R}^N\setminus\Omega\\
\frac{\partial u}{\partial\nu}&=0,~\text{on}~\partial\Omega.
\end{split}
\end{align}
A noteworthy work is due to Cao \cite{cao1} who studied the existence of positive solution to \eqref{esteban} under the assumption that 
$$P(x)\geq \tilde{P}-Ce^{-a|x|}|x|^{-m}~~\text{as}~~|x|\rightarrow\infty$$ together with the condition in $(P_1)$. Here $a=\frac{2(p+1)}{p-1}$, $m>N-1$ and $C>0$.  Continuing in this article \cite{cao1}, Cao further proved the existence of sign changing solution (also know as a {\it nodal} solution) under the additional assumption that
$$P(x)\geq \tilde{P}+Ce^{-\frac{p|x|}{p+1}}|x|^{-m}~~\text{as}~~|x|\rightarrow\infty$$ together with the condition in $(P_1)$ with $0<m<\frac{N-1}{2}$. Alves et al. \cite{alv1} proved that the results found in \cite{cao1} also holds true for the $p$-Laplacian operator and for a larger class of nonlinearity. The problem with $N=2$ and a nonlinearity of critical growth has also been considered by Alves in \cite{alv2}.\\
Off-late, the fractional Laplacian operator gained a mileage as far as attention is concerned as it naturally arises in many different contexts, viz. optimization, thin obstacle problem, finance, crystal dislocation, conservation laws, limits of quantum mechanics, material science and water waves to name a few. Interested readers may refer to the articles [13,19,20,31,32] and the references therein. We drew motivation from the work due to of Alves \cite{alv3} to study the problem \eqref{main}. To the best of our knowledge, there is no article in the literature that addresses the problem \eqref{main} driven by a singularity in an exterior domain and a Neumann boundary condition. We will first prove the existence of a nonnegative ground state solution to \eqref{main}. Capitalising on this proof, we will further show that the problem has infinitely many bounded solutions for a finite range of $\lambda$. The main result concerning the existence of a ground state solution is as follows.
\begin{theorem}\label{main_thm}
Suppose $p\in(1,2_s^*-1)$ and $(P_1)$ holds, then \eqref{main} has positive ground state solution. Further, there exists $\lambda_0>0$ such that \eqref{main} has infinitely many bounded solutions whenever $\lambda\in (0,\lambda_0)$.
\end{theorem}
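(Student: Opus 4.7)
I would first cast \eqref{main} in variational form, following the nonlocal-Neumann framework of Dipierro--Ros-Oton--Valdinoci. Introduce the Hilbert space $X$ of measurable $u:\mathbb{R}^N\to\mathbb{R}$ equipped with
\[
\|u\|^2 \;=\; \frac{C_{N,s}}{2}\iint_{\mathcal{Q}}\frac{|u(x)-u(y)|^2}{|x-y|^{N+2s}}\,dx\,dy \;+\; \int_{\mathbb{R}^N\setminus\Omega} u^2\,dx,
\qquad \mathcal{Q} = \mathbb{R}^{2N}\setminus(\Omega\times\Omega).
\]
By the nonlocal Green identity producing $N_s$, weak solutions of \eqref{main} correspond (away from $\{u=0\}$) to critical points of the energy
\[
J_\lambda(u) \;=\; \tfrac{1}{2}\|u\|^2 \;-\; \tfrac{\lambda}{1-\gamma}\!\!\int_{\mathbb{R}^N\setminus\Omega}(u^+)^{1-\gamma}\,dx \;-\; \tfrac{1}{p+1}\!\!\int_{\mathbb{R}^N\setminus\Omega}P(x)(u^+)^{p+1}\,dx,
\]
which is finite on $X$ but only Gateaux-differentiable off the singular set $\{u=0\}$. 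I would therefore work with a Nehari-manifold reformulation combined with Ekeland's variational principle to compensate for the loss of $C^1$ regularity.

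\textbf{Positive ground state.} For each nonzero $u\geq 0$, the fiber $t\mapsto J_\lambda(tu)$ starts at $0^-$ (since $t^{1-\gamma}\gg t^2$ near $0$), rises through a strictly positive mountain-pass maximum, and falls to $-\infty$ (since $p+1>2$); hence the Nehari set
\[
\mathcal{N}_\lambda \;=\; \bigl\{u\in X\setminus\{0\}:\; u\geq 0,\ \|u\|^2=\lambda\!\int (u^+)^{1-\gamma}\!+\!\int P(u^+)^{p+1}\bigr\}
\]
is nonempty and the projector $u\mapsto t_u u$ is continuous. Let $m_\lambda = \inf_{\mathcal{N}_\lambda}J_\lambda$. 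The principal obstruction is the loss of compactness induced by the unbounded domain, which I would handle by comparison with the \emph{problem at infinity} obtained by replacing $\mathbb{R}^N\setminus\Omega$ by $\mathbb{R}^N$ and $P$ by $\tilde{P}$; its ground state level $m_\infty$ is attained via translation and Schwarz symmetrization. The decisive, and most delicate, step is the strict inequality $m_\lambda<m_\infty$, which I would prove by testing $J_\lambda$ against suitably translated copies $w(\cdot-y_n)$ of a ground state of the limit problem with $|y_n|\to\infty$, using $(P_1)$ together with the negative contribution of the singular integral to beat the energy at infinity. Once $m_\lambda<m_\infty$ is established, a Lions-type splitting lemma adapted to the nonlocal Neumann norm rules out vanishing and dichotomy along a minimizing sequence, yielding a weak limit $u_\lambda\in\mathcal{N}_\lambda$ with $J_\lambda(u_\lambda)=m_\lambda$. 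Strict positivity of $u_\lambda$ then follows from the fractional strong maximum principle applied to $(-\Delta)^s u_\lambda+u_\lambda\geq 0$.

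\textbf{Infinitely many bounded solutions.} Because $J_\lambda$ is not $C^1$ at $0$, standard symmetric minimax cannot be invoked directly; I would pass to the even variant $\tilde{J}_\lambda$ obtained by replacing $(u^+)^{1-\gamma}$ with $|u|^{1-\gamma}$ and then truncate. Fix a small $\delta>0$ and approximate $|u|^{-\gamma-1}u$ by a smooth odd bounded $g_\delta$ coinciding with it for $|u|\geq\delta$, producing an even $C^1$ functional $\tilde{J}_{\lambda,\delta}$ that, thanks to $p<2_s^*-1$ and the limit-problem comparison from the previous step, satisfies the Palais--Smale condition on sublevel sets. The $\mathbb{Z}_2$-equivariant Clark/genus theorem then yields a sequence of negative critical values $c_k^\delta\nearrow 0$ with distinct antipodal critical points $u_k^\delta$ realized on symmetric sets of Krasnosel'skii genus at least $k$. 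A Moser--De Giorgi iteration on the truncated equation produces uniform bounds $\|u_k^\delta\|_\infty\leq M(\lambda)$ with $M(\lambda)\to 0$ as $\lambda\to 0^+$; selecting $\lambda_0$ so that $M(\lambda_0)$ is small enough for the truncation to become irrelevant in the sense required by fractional elliptic regularity, I would send $\delta\to 0$ while preserving the genus structure, thereby retaining countably many distinct bounded solutions of \eqref{main} for all $\lambda\in(0,\lambda_0)$. The hardest point of this last step is to argue that each genus-$k$ critical value persists in the limit $\delta\to 0$ and corresponds to a genuine solution of the untruncated problem rather than to a spurious minimum pinned in the truncation region.
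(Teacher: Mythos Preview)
Your overall architecture for the ground state—Nehari projection, strict inequality $m_\lambda<m_\infty$ via translated limit-problem ground states, and a Lions-type compactness argument—matches the paper's strategy in spirit. The substantive divergence is in how the singularity is tamed. The paper does \emph{not} work directly on the non-$C^1$ functional or on a $\delta$-regularization; instead it first solves the purely singular problem $(-\Delta)^s u+u=\lambda u^{-\gamma}$ to obtain a positive subsolution $\underline{u}_\lambda$, then defines a cut-off nonlinearity $\bar f(x,t)$ that freezes the singular term for $|t|\le\underline{u}_\lambda(x)$ and coincides with the original one above. The resulting functional $\bar I$ is genuinely $C^1$, so mountain-pass and symmetric mountain-pass apply without Ekeland or approximation, and an a~priori comparison (any weak solution of the cut-off problem satisfies $u\ge\underline{u}_\lambda$) guarantees that critical points of $\bar I$ solve the original equation. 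This buys the paper a much cheaper multiplicity proof: it simply runs the symmetric mountain-pass theorem on $\bar I$ and reads off an unbounded sequence of critical values, with boundedness of each solution obtained afterwards by a separate Moser iteration.

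Your multiplicity scheme, by contrast, regularizes near zero and invokes a Clark/genus argument to produce \emph{negative} critical levels $c_k^\delta\nearrow 0$, then attempts to pass $\delta\to 0$. Be aware that the step you flag as hardest is indeed problematic as written: you argue that Moser iteration gives $\|u_k^\delta\|_\infty\le M(\lambda)\to 0$ as $\lambda\to 0^+$, and then choose $\lambda_0$ so that $M(\lambda_0)$ is small. But small $L^\infty$ norm pushes the solutions \emph{into} the truncation zone $\{|u|<\delta\}$, not out of it; what you would actually need is a \emph{lower} bound $|u_k^\delta|\ge c_k>0$ uniform in $\delta$, so that for $\delta<c_k$ the truncation is inactive. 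The paper's subsolution cut-off supplies exactly this lower barrier for free via the comparison $u\ge\underline{u}_\lambda$, which is why its route is shorter and avoids the limit passage altogether.
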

\section{Preliminaries}
\noindent The section introduces the readers to some well known function spaces besides considering the following limiting problem.
\begin{align}\label{limitmain}\tag{\mbox{$P_{\infty}$}}
\begin{split}
(-\Delta)^{s}u+u&=\lambda|u|^{-\gamma-1}u+\tilde{P}|u|^{p-1}u,~\text{in}~\mathbb{R}^N\\
u&\in H^s(\mathbb{R}^N).
\end{split}
\end{align}
The operator $(-\Delta)^s$ is defined as follows.
\begin{align}\label{s_lap}
\begin{split}
(-\Delta)^{s}u(x)&=C_{N,s}P.V.\int_{\mathbb{R}^N}\frac{u(x)-u(y)}{|x-y|^{N+2s}}dy
\end{split}
\end{align}
where $C_{N,s}=2^{2s-1}\pi^{-\frac{N}{2}}\frac{\Gamma(\frac{N+2s}{2})}{|\Gamma(-s)|}$. We will denote $H^s(\mathbb{R}^N)$ to be the fractional Sobolev space equipped with the norm
$$\|u\|=\left(\frac{1}{2}\int_{\mathbb{R}^N}\int_{\mathbb{R}^N}\frac{|u(x)-u(y)|^2}{|x-y|^{N+2s}}dydx+\int_{\mathbb{R}^N}|u|^2dx\right)^{\frac{1}{2}}.$$
Let $D\subset\mathbb{R}^N$ be a smooth domain. We now define the fractional Sobolev space pertaining to an exterior domain as follows.
$$H_D^s=\left\{u:\mathbb{R}^N\rightarrow\mathbb{R}~\text{measurable}:\frac{1}{2}\iint_{\mathbb{R}^{2N}\setminus(D^c)^2}\frac{|u(x)-u(y)|^2}{|x-y|^{N+2s}}dydx+\int_{D}|u|^2dx<\infty\right\}$$
where $D^c=\mathbb{R}^N\setminus D$.
This space is equipped with the norm
$$\|u\|_{s}=\left(\frac{1}{2}\iint_{\mathbb{R}^{2N}\setminus(D^c)^2}\frac{|u(x)-u(y)|^2}{|x-y|^{N+2s}}dydx+\int_{D}|u|^2dx\right)^{\frac{1}{2}}.$$
This space $H_{D}^s$ is a Hilbert space with an inner product $\langle\cdot,\cdot\rangle_{H_{D}^s}$ given by
$$\langle u,v\rangle_{H_{D}^s}=\frac{1}{2}\iint_{\mathbb{R}^{2N}\setminus(D^c)^2}\frac{(u(x)-u(y))(v(x)-v(y))}{|x-y|^{N+2s}}dydx+\int_{D}uvdx.$$
We now state a few embedding results pertaining to the space $H_D^s$ which can be found in \cite{18}.
\begin{lemma}\label{emb}
\begin{enumerate}
\item Let $H^s(D)$ be the classical fractional Sobolev space equipped with the norm
$$\|u\|_{H^s(D)}^2=\frac{1}{2}\int_{D}\int_{D}\frac{|u(x)-u(y)|^2}{|x-y|^{N+2s}}dydx+\int_{D}|u|^2dx.$$
Since $D \times D\subset \mathbb{R}^{2N}\setminus(D^c)^2$, then the embedding $H_D^s\hookrightarrow H^s(D)$ is continuous.
\item The embedding $H^s(\mathbb{R}^N)\hookrightarrow H_D^s$ is continuous.
\item Since $H^s(D)\hookrightarrow L^p(D)$ is continuously for every $p\in\left[2,\frac{2N}{N-2s}\right]$, by $(1)$ we have 
$$H_D^s\hookrightarrow L^p(D)~\text{for all}~p\in\left[2,\frac{2N}{N-2s}\right].$$
\item If $D$ is bounded, we have the compact embedding
$$H_D^s\hookrightarrow L^p(D)~\text{for all}~p\in\left[1,\frac{2N}{N-2s}\right).$$
\end{enumerate}
\end{lemma}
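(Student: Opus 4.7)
The four claims are essentially bookkeeping between three nested Gagliardo integration domains, $D\times D \subset \mathbb{R}^{2N}\setminus(D^c)^2 \subset \mathbb{R}^{2N}$, combined with classical fractional Sobolev embeddings. My plan is to handle (1) and (2) by direct norm comparison on these nested domains, and then obtain (3) and (4) as immediate corollaries of (1) together with standard facts about $H^s$ on a smooth set. Nothing deep is needed; the statement is really a structural lemma whose proof closely follows its own wording.

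For (1), I would begin with the set identity $\mathbb{R}^{2N}\setminus(D^c)^2=(D\times\mathbb{R}^N)\cup(\mathbb{R}^N\times D)$, from which the containment $D\times D\subset\mathbb{R}^{2N}\setminus(D^c)^2$ is immediate. Since the Gagliardo integrand is nonnegative, restricting it from the larger domain to $D\times D$ shows that the double-integral part of $\|u\|_{H^s(D)}^2$ is bounded above by the corresponding part of $\|u\|_s^2$; the $L^2(D)$ pieces coincide, so $\|u\|_{H^s(D)}\leq\|u\|_s$ for every $u\in H_D^s$. For (2) I would run the same argument in reverse, using $\mathbb{R}^{2N}\setminus(D^c)^2\subset\mathbb{R}^{2N}$ and $D\subset\mathbb{R}^N$: both pieces of $\|u\|_s^2$ are dominated by the corresponding pieces of $\|u\|_{H^s(\mathbb{R}^N)}^2$, yielding $\|u\|_s\leq\|u\|_{H^s(\mathbb{R}^N)}$. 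No extension or trace theorem is needed in either direction.

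Parts (3) and (4) are then pure composition of embeddings. For (3), I would chain the continuous embedding $H_D^s\hookrightarrow H^s(D)$ from (1) with the classical continuous fractional Sobolev embedding $H^s(D)\hookrightarrow L^p(D)$, valid for $p\in[2,2_s^*]$ on a sufficiently regular domain (see, e.g., Di Nezza--Palatucci--Valdinoci). For (4), with $D$ additionally assumed bounded and smooth (which matches the paper's standing hypothesis on $\Omega$), the fractional Rellich--Kondrachov theorem supplies the compact embedding $H^s(D)\hookrightarrow L^p(D)$ for every $p\in[1,2_s^*)$; combining with (1), any bounded sequence in $H_D^s$ is bounded in $H^s(D)$ and hence admits an $L^p(D)$-convergent subsequence, which is the claimed compactness.

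The only point at which one might pause is the set identity underlying (1), since the definition of the $H_D^s$ seminorm excludes only pairs $(x,y)\in D^c\times D^c$ while still accounting for all \emph{cross} interactions between $D$ and $D^c$. Once this is written out explicitly, the rest of the lemma reduces to monotonicity of integrals over nested domains and a direct invocation of the known theory of $H^s$ on smooth (bounded) sets, so I do not anticipate any genuine obstacle.
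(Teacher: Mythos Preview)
Your proposal is correct and follows the natural argument: monotonicity of the Gagliardo integral on nested domains for (1)--(2), then composition with the classical fractional Sobolev and Rellich--Kondrachov embeddings for (3)--(4). The paper itself does not prove this lemma at all but simply states it with a citation to \cite{18}, so there is no in-paper proof to compare against; your write-up supplies exactly the standard justification one would give.
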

\begin{definition}[Palais Smale condition \cite{kesavan}]\label{PS}
Let $X$ be a Banach space and $J:X\rightarrow\mathbb{R}$ a $C^1$ functional. It is said to satisfy the {\it Palais-Smale} condition $(PS)$ if the following holds: whenever $(u_n)\subset X$ is such that $J(u_n)$ is bounded and $J'(u_n)\rightarrow 0$ in $X^*$, the dual space of $X$, then $(u_n)$ has a convergent subsequence. 
\end{definition}
\begin{definition}[Mountain pass theorem of  Ambrosetti and Rabinowitz \cite{kesavan}]\label{MP_thm}
Let $J:X\rightarrow\mathbb{R}$ be a $C^1$ functional satisfying $(PS)$. Let $u_0, u_2\in X$, $c\in\mathbb{R}$ and $R>0$ such that 
\begin{enumerate}
\item $\|u_1-u_0\|>R$
\item $J(u_0), J(u_1)<c\leq J(v)$, for all $v$ such that $\|v-u_0\|=R$.
\end{enumerate}
Then $J$ has a critical value $\tilde{c}\geq c$ defined by 
$$\tilde{c}=\underset{\delta\in\mathcal{P}}{\inf}~\underset{t\in[0,1]}{\max} \{J(\delta(t))\}$$
where $\mathcal{P}$ is the collection of all continuous paths $\delta:[0,1]\rightarrow X$ such that $\delta(0)=u_0$ and $\delta(1)=u_1$.
\end{definition}

\subsection{Cut-off functional}
We first define the functional $I:H_{\Omega^c}^s\rightarrow \mathbb{R}$ corresponding to the problem in \eqref{main} as follows.
\begin{align}\label{funct}
\begin{split}
I(u)&=\frac{1}{2}\left(\frac{1}{2}\iint_{\mathbb{R}^{2N}\setminus(\Omega^c)^2}\frac{|u(x)-u(y)|^2}{|x-y|^{N+2s}}dydx+\int_{\mathbb{R}^N\setminus\Omega}|u|^2dx\right)-\frac{\lambda}{1-\gamma}\int_{\mathbb{R}^N\setminus\Omega}|u|^{1-\gamma}dx-\frac{1}{p+1}\int_{\mathbb{R}^N\setminus\Omega}|u|^{p+1}dx.
\end{split}
\end{align}
Note that, the functional $I$ is not $C^1$ over $H_{\Omega^c}^s(\mathbb{R}^N)$ due to the presence of the singular term. Towards this, we will define a cut-off functional to overcome this problem. We now prove the following Lemma which will be used to construct the cut-off functional.
\begin{lemma}
	\label{existence_positive_soln}
	Let $0<\gamma<1$, $\lambda,\mu>0$. Then the following problem 
	\begin{eqnarray}\label{auxprob_appendix}
	(-\Delta)^su+u&=&\lambda u^{-\gamma},~\text{in}~\mathbb{R}^N\setminus\Omega\nonumber\\
	u&>&0,~\text{in}~\Omega\nonumber\\
	u&=&0,~\text{in}~\Omega
	\end{eqnarray}
	has a unique weak solution in $H_{\Omega}^s$. This solution is denoted by $\underline{u}_{\lambda}$, satisfies $\underline{u}_{\lambda}\geq \epsilon_{\lambda} v_0$ a.e. in $\Omega^c$, where $\epsilon_{\lambda}>0$ is a constant.
\end{lemma}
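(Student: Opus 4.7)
The plan is to build $\underline{u}_\lambda$ as the limit of minimizers of truncated non-singular functionals, establish a uniform pointwise lower bound via a subsolution/comparison argument, and derive uniqueness from the strict monotonicity of $t\mapsto -t^{-\gamma}$. For each $n\in\mathbb{N}$ set $G_n(t)=\int_0^{t^+}(\tau+\tfrac{1}{n})^{-\gamma}\,d\tau$ and consider
\[
J_n(u)=\tfrac{1}{2}\|u\|_s^{\,2}-\lambda\int_{\mathbb{R}^N\setminus\Omega} G_n(u)\,dx
\]
on $H_{\Omega^c}^s$. Because $G_n(t)\leq n^{\gamma}\,t^+$, the potential term is controlled by a linear function, so $J_n$ is well defined, $C^1$, weakly lower semicontinuous, and coercive. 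A minimizer $u_n$ therefore exists; replacing $u_n$ by $|u_n|$ leaves $G_n$ invariant and does not increase the Gagliardo--Slobodeckij seminorm, so we take $u_n\geq 0$. Its Euler--Lagrange equation is $(-\Delta)^s u_n+u_n=\lambda(u_n+\tfrac{1}{n})^{-\gamma}$ in $\Omega^c$ with $N_s u_n=0$ in $\Omega$.

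The decisive step is a uniform-in-$n$ lower bound $u_n\geq \epsilon_\lambda v_0$, where $v_0$ is the fixed positive reference function. I would choose $\epsilon_\lambda>0$ so small that $w:=\epsilon_\lambda v_0$ is a weak subsolution of the $n$-th equation for every $n\geq 1$ simultaneously, which is possible because the left side $(-\Delta)^s w+w$ is $O(\epsilon_\lambda)$ while the right side $\lambda(w+\tfrac{1}{n})^{-\gamma}$ is bounded below by $\lambda(\epsilon_\lambda\|v_0\|_\infty+1)^{-\gamma}\to+\infty$ as $\epsilon_\lambda\to 0^+$. Subtracting the equations for $u_n$ and $w$ and testing against $(w-u_n)^+\in H_{\Omega^c}^s$, the monotonicity of $t\mapsto (t+\tfrac{1}{n})^{-\gamma}$ makes the right-hand side nonpositive on $\{w>u_n\}$, while the pointwise inequality $(a-b)(a^+-b^+)\geq (a^+-b^+)^2$ applied inside the Gagliardo kernel forces $\|(w-u_n)^+\|_s^{\,2}\leq 0$. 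Hence $u_n\geq \epsilon_\lambda v_0$ a.e.\ in $\Omega^c$.

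With this uniform bound, $\{u_n\}$ is bounded in $H_{\Omega^c}^s$, so up to a subsequence $u_n\rightharpoonup \underline{u}_\lambda$ weakly and pointwise a.e., and the limit inherits $\underline{u}_\lambda\geq \epsilon_\lambda v_0>0$. For $\varphi\in H_{\Omega^c}^s$ with compact support, $(u_n+\tfrac{1}{n})^{-\gamma}|\varphi|\leq (\epsilon_\lambda v_0)^{-\gamma}|\varphi|\in L^1$, so dominated convergence lets me pass to the limit in the weak formulation and conclude that $\underline{u}_\lambda$ solves the problem. For uniqueness, if $u_1,u_2$ are two solutions bounded below by $\epsilon_\lambda v_0$, testing the difference of the equations against $u_1-u_2$ yields
\[
\|u_1-u_2\|_s^{\,2}\,=\,\lambda\int_{\Omega^c}\bigl(u_1^{-\gamma}-u_2^{-\gamma}\bigr)(u_1-u_2)\,dx\,\leq\,0,
\]
and the strict decrease of $t\mapsto t^{-\gamma}$ on $(0,\infty)$ forces $u_1=u_2$.

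I anticipate the main obstacle to be the weak comparison principle under the nonlocal Neumann condition: one has to verify that $(w-u_n)^+$ is admissible in $H_{\Omega^c}^s$ and that the pairs $(x,y)\in\mathbb{R}^{2N}\setminus\Omega^2$ with exactly one coordinate in $\Omega$ --- where the Neumann data are encoded --- do not spoil the sign of $\langle w-u_n,(w-u_n)^+\rangle_{H^s_{\Omega^c}}$. A secondary delicate point is the density argument extending the weak formulation from $C_c^\infty$ test functions to general elements of $H_{\Omega^c}^s$ near the support of the singular term, which is what the uniform bound $u_n\geq \epsilon_\lambda v_0$ is ultimately designed to control.
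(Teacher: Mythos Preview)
Your approximation-and-comparison route is correct in outline but differs substantially from the paper's argument. The paper minimizes the singular energy
\[
E(u)=\tfrac{1}{2}\|u\|_s^{2}-\tfrac{\lambda}{1-\gamma}\int_{\Omega^c}|u|^{1-\gamma}\,dx
\]
\emph{directly} on $H_{\Omega^c}^s$: because $0<\gamma<1$, the singular potential is integrable for $u\in H_{\Omega^c}^s$, so no regularization is needed. Coercivity and weak lower semicontinuity produce a nontrivial minimizer $u_0$; the lower bound $u_0\geq\epsilon_\lambda v_0$ is obtained not by a subsolution/comparison principle but by a convexity argument---one checks $\langle E'(\epsilon v_0),v_0\rangle<0$ for small $\epsilon$ and uses that $t\mapsto E(u_0+t(\epsilon_\lambda v_0-u_0)^+)$ is convex with nonnegative derivative at $t=0^+$ to force $(\epsilon_\lambda v_0-u_0)^+\equiv 0$. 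Uniqueness then follows from strict convexity of $E$ on the positive cone rather than by testing the difference of two equations. Your scheme has the advantage of being robust for stronger singularities ($\gamma\geq 1$) and of making the weak formulation of the limit equation transparent, at the cost of an extra limiting step and the nonlocal-Neumann comparison principle you rightly flag as delicate; the paper's convexity argument sidesteps that issue entirely. One small slip in your write-up: the lower bound $\lambda(\epsilon_\lambda\|v_0\|_\infty+1)^{-\gamma}$ tends to $\lambda$, not to $+\infty$, as $\epsilon_\lambda\to 0^+$; this is harmless, since the left side is $O(\epsilon_\lambda)\to 0$ and the subsolution inequality still holds for $\epsilon_\lambda$ small.
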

\begin{proof}
	We follow the proof in \cite{giaco_1}. First, we note that an energy functional on $H_{\Omega^c}^s$ formally corresponding to \eqref{auxprob_appendix} can be defined as follows.
	\begin{align}
	\label{ef_aux}
	E(u)=&\frac{1}{2}\left(\frac{1}{2}\iint_{\mathbb{R}^{2N}\setminus(\Omega^c)^2}\frac{|u(x)-u(y)|^2}{|x-y|^{N+2s}}dydx+\int_{\mathbb{R}^N\setminus\Omega}|u|^2dx\right)-\frac{\lambda}{1-\gamma}\int_{\mathbb{R}^N\setminus\Omega}|u|^{1-\gamma}dx
	\end{align}
	for $u\in H_{\Omega}^s$. By the Poincar\'{e} inequality, this functional is coercive and continuous on $H_{\Omega^c}^s$. It follows that $E$ possesses a global minimizer $u_0\in H_{\Omega^c}^s$. Clearly, $u_0\neq 0$ since $E(0)=0>E(\epsilon v_0)$ for sufficiently small $\epsilon$ and some $v_0>0$ in $\mathbb{R}^N\setminus\Omega$.\\
	Second, we have the decomposition $u=u^+-u^-$. Thus if $u_0$ is a global minimizer for $E$, then so is $|u_0|$, by $E(|u_0|)\leq E(u_0)$. Clearly enough, the equality holds iff $u_0^-=0$ a.e. in $\mathbb{R}^N\setminus\Omega$. In other words we need to have $u_0\geq 0$, i.e. $u_0\in H_{\Omega^c}^s$ where 
	$${H_{\Omega^c}^s}^+=\{u\in H_{\Omega}^s:u\geq 0~\text{a.e. in}~\mathbb{R}^N\setminus\Omega\}$$
	is the positive cone in $H_{\Omega^c}^s$.\\
	Third, we will show that $u_0\geq \epsilon v_0>0$ holds a.e. in $\mathbb{R}^N\setminus\Omega$ for small enough $\epsilon$. Observe that, 
	\begin{align}\label{neg_der}
	\begin{split}
	E'(tv_0)|_{t=\epsilon}=&\epsilon\left(\frac{1}{2}\iint_{\mathbb{R}^{2N}\setminus(\Omega^c)^2}\frac{|u(x)-u(y)|^2}{|x-y|^{N+2s}}dydx+\int_{\mathbb{R}^N\setminus\Omega}|u|^2dx\right)-\lambda \epsilon^{-\gamma}\int_{\mathbb{R}^N\setminus\Omega}|u|^{1-\gamma}dx<0
	\end{split}
	\end{align}
	whenever $0<\epsilon\leq \epsilon_{\lambda}$ for some sufficiently small $\epsilon_{\lambda}$. We now show that $u_0\geq \epsilon_{\lambda}v_0$. On the contrary, suppose $w=(\epsilon_{\lambda}v_0-u_0)^+$ does not vanish identically in $\mathbb{R}^N\setminus\Omega$. Denote $$(\mathbb{R}^N\setminus\Omega)^+=\{x\in\mathbb{R}^N\setminus\Omega:w(x)>0\}$$ to be the positive cone in $H_{\Omega^c}^s$.
	We consider the function $\zeta(t)=E(u_0+tw)$ of $t\geq 0$. This function is convex owing to its definition over the convex set ${H_{\Omega}^s}^+$. Further $\zeta'(t)=\langle E'(u_0+tw),w \rangle$ is nonnegative and nondecreasing for $t>0$. Consequently for $0<t<1$ we have 
	\begin{align}\label{ineq_appendix}
	\begin{split}
	0\leq \zeta'(1)-\zeta'(t)&=\langle E'(u_0+w)-E'(u_0+tw),w\rangle\\
	&=\int_{(\mathbb{R}^N\setminus\Omega)^+}E'(u_0+w)dx-\zeta'(t)\\
	&<0
	\end{split}
	\end{align}
	by inequality \eqref{neg_der} and $\zeta'(t)\geq 0$ with $\zeta'(t)$ being nondecreasing for every $t>0$, which is a contradiction. Therefore $w=0$ in $\mathbb{R}^N\setminus\Omega$ and hence $u_0\geq \epsilon_{\lambda}v_0$ a.e. in $\mathbb{R}^N\setminus\Omega$.\\
	Finally, the functional $E$ being strictly convex on ${H_{\Omega^c}^s}^+$, we conclude that $u_0$ is the only critical point of $E$ in ${H_{\Omega^c}^s}^+$. 
\end{proof}
\noindent We now define the following cut-off function which will be used to create the required cut-off functional.
\[   
\bar{f}(x,t) = 
\begin{cases}
\lambda|t|^{-\gamma-1}t+|t|^{p-1}t, &~\text{if}~|t|>\underline{u}_{\lambda}\\
\lambda\underline{u}_{\lambda}^{-\gamma}+\underline{u}_{\lambda}^{p},&~\text{if}~|t|\leq \underline{u}_{\lambda}
\end{cases}\]
where $\underline{u}_{\lambda}$ is a solution to \eqref{auxprob_appendix}.
Define 
\begin{align}\label{cut-off_omega_comp}
\bar{I}(u)&=\frac{1}{2}\left(\frac{1}{2}\iint_{\mathbb{R}^{2N}\setminus(\Omega)^2}\frac{|u(x)-u(y)|^2}{|x-y|^{N+2s}}dydx+\int_{\mathbb{R}^N\setminus\Omega}|u|^2dx\right)-\int_{\mathbb{R}^N\setminus\Omega}\bar{F}(x,u)dx
\end{align}
where $\bar{F}(x,t)=\int_{0}^t\bar{f}(x,s)ds$. This functional $\bar{I}$ thus defined is in $C^1(H_{\Omega^c}^s;\mathbb{R})$ and it is standard to show that
\begin{align}\label{der_cut-off_omega_comp}
\langle\bar{I}'(u),v\rangle&=\frac{1}{2}\left(\frac{1}{2}\iint_{\mathbb{R}^{2N}\setminus(\Omega)^2}\frac{(u(x)-u(y))(v(x)-v(y))}{|x-y|^{N+2s}}dydx+\int_{\mathbb{R}^N\setminus\Omega}uvdx\right)-\int_{\mathbb{R}^N\setminus\Omega}\bar{f}(x,u)vdx
\end{align}
for all $v\in H_{\Omega^c}^s$.\\
Similarly, we treat the functional corresponding to the problem defined in \eqref{limitmain}. The functional is defined as follows.
\begin{align}\label{funct_inf}
\begin{split}
I_{\infty}(u)&=\frac{1}{2}\left(\frac{1}{2}\iint_{\mathbb{R}^{2N}}\frac{|u(x)-u(y)|^2}{|x-y|^{N+2s}}dydx+\int_{\mathbb{R}^N}|u|^2dx\right)-\frac{\lambda}{1-\gamma}\int_{\mathbb{R}^N}|u|^{1-\gamma}dx-\frac{1}{p+1}\int_{\mathbb{R}^N}|u|^{p+1}dx.
\end{split}
\end{align}
A similar modification as done to the functional $I$ yields us the following.
\begin{align}\label{cut-off_omega_comp_inf}
\bar{I}_{\infty}(u)&=\frac{1}{2}\left(\frac{1}{2}\iint_{\mathbb{R}^{2N}}\frac{|u(x)-u(y)|^2}{|x-y|^{N+2s}}dydx+\int_{\mathbb{R}^N}|u|^2dx\right)-\int_{\mathbb{R}^N}\bar{G}(x,u)dx
\end{align}
Again, it is easy to see that $\bar{I}_{\infty}\in C^1(H^s(\mathbb{R}^N);\mathbb{R})$ and \begin{align}\label{der_cut-off_omega_comp_inf}
\langle\bar{I}_{\infty}'(u),v\rangle&=\frac{1}{2}\left(\frac{1}{2}\iint_{\mathbb{R}^{2N}}\frac{(u(x)-u(y))(v(x)-v(y))}{|x-y|^{N+2s}}dydx+\int_{\mathbb{R}^N}uvdx\right)-\int_{\mathbb{R}^N}\bar{g}(x,u)vdx
\end{align}
for all $v\in H^s(\mathbb{R}^N)$.\\
Here \[   
\bar{g}(x,t) = 
\begin{cases}
\lambda|t|^{-\gamma-1}t+|t|^{p-1}t, &~\text{if}~|t|>\underline{u}_{\lambda}^{\infty}\\
\lambda(\underline{u}_{\lambda}^{\infty})^{-\gamma}+(\underline{u}_{\lambda}^{\infty})^{p},&~\text{if}~|t|\leq \underline{u}_{\lambda}^{\infty}
\end{cases}\]
where $\bar{G}(x,t)=\int_{0}^t\bar{g}(x,s)ds$. Further, $\underline{u}_{\lambda}^{\infty}$ is a solution to the following problem. 
\begin{eqnarray}\label{auxprob_appendix_inf}
(-\Delta)^su+u&=&\lambda u^{-\gamma},~\text{in}~\mathbb{R}^N~~~~~~~~~~~~~~(P_{\infty})\nonumber\\
u&>&0,~\text{in}~\mathbb{R}^N.
\end{eqnarray}
Existence of a unique solution to \eqref{auxprob_appendix_inf} can be proved by following {\it verbatim} of the proof of Lemma \ref{existence_positive_soln}.
\begin{remark}\label{key_obs}
Instead of studying the problem \eqref{PS}, we will study the following problem
\begin{align}\label{main_mod}\tag{{\bf P}'}
\begin{split}
(-\Delta)^{s}u+u&=\bar{f}(x,u),~\text{in}~\mathbb{R}^N\setminus\Omega\\
N_su(x)&=0,~\text{in}~\Omega.
\end{split}
\end{align}
This is because a solution to \eqref{main_mod} is also a solution to \eqref{main}.
\end{remark}
\section{Proof of the main theorem}
\noindent This Section is devoted to the proof of the main Theorem \eqref{main_thm}. We begin by stating a {\it Lions type} Lemma that will play a crucial role in the proof of the main theorem. 
\begin{lemma}[Refer \cite{alv3}]\label{Lions_type}
Let $D\subset\mathbb{R}^N$ be an exterior domain with smooth bounded boundary and let $(u_n)\subset H_D^s$ be a bounded sequence such that 
\begin{align}
\underset{n\rightarrow\infty}{\lim}\underset{y\in\mathbb{R}^N}{\sup}\int_{U(y,T)}|u_n|^2dx=0,
\end{align}
for some $T>0$ and $U(y,T)=B(y,T)\cap D$ with $U(y,T)\neq\phi$. Then 
\begin{align}
\underset{n\rightarrow\infty}{\lim}\int_D|u_n|^pdx~\text{for all}~p\in(2,2_s^*).
\end{align}
\end{lemma}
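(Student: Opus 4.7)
The plan is to prove the $L^p$ vanishing by coupling a local Hölder interpolation with the critical Sobolev embedding from Lemma~\ref{emb}, first establishing the decay for large enough exponents and then extending to the rest of $(2,2_s^*)$ via global interpolation.

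First I would fix a locally finite covering $\{B(y_k,T)\}_{k\geq 1}$ of $\mathbb{R}^N$ by balls of radius $T$ whose multiplicity is bounded by some constant $C_0=C_0(N)$, and set $U(y_k,T)=B(y_k,T)\cap D$. For $p\in(2,2_s^*)$ define $\theta=\frac{2_s^*-p}{2_s^*-2}\in(0,1)$, so that $2\theta+2_s^*(1-\theta)=p$. The splitting $|u_n|^p=|u_n|^{2\theta}|u_n|^{2_s^*(1-\theta)}$ combined with Hölder's inequality for the conjugate exponents $1/\theta$ and $1/(1-\theta)$ on each piece yields
\begin{equation*}
\int_{U(y_k,T)}|u_n|^p\,dx\leq\Big(\int_{U(y_k,T)}|u_n|^2\,dx\Big)^{\theta}\Big(\int_{U(y_k,T)}|u_n|^{2_s^*}\,dx\Big)^{1-\theta}.
\end{equation*}
Writing $M_n:=\sup_{y\in\mathbb{R}^N}\int_{U(y,T)}|u_n|^2\,dx$, the hypothesis gives $M_n\to 0$.

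Next, I would bound the first factor by $M_n^{\theta}$ and convert the second, via a local fractional Sobolev inequality of the form $\big(\int_{U(y_k,T)}|u_n|^{2_s^*}\,dx\big)^{1/2_s^*}\leq C\|u_n\|_{H^s(U(y_k,T))}$, into $C\|u_n\|_{H^s(U(y_k,T))}^{2_s^*(1-\theta)}$. Provided $2_s^*(1-\theta)\geq 2$, i.e.\ $p\geq p^{\ast}:=4-4/2_s^*$, one has
\begin{equation*}
\|u_n\|_{H^s(U(y_k,T))}^{2_s^*(1-\theta)}\leq\|u_n\|_{H_D^s}^{2_s^*(1-\theta)-2}\,\|u_n\|_{H^s(U(y_k,T))}^{2},
\end{equation*}
and the bounded overlap of the cover together with the $H_D^s$-boundedness of $(u_n)$ makes $\sum_k\|u_n\|_{H^s(U(y_k,T))}^2$ finite. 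Summing the local estimates then gives $\int_D|u_n|^p\,dx\leq C\,M_n^{\theta}\to 0$ for every $p\in[p^{\ast},2_s^*)$.

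For $p\in(2,p^{\ast})$, I would finish by global interpolation: choose $\mu\in(0,1)$ with $\frac{1}{p}=\frac{\mu}{2}+\frac{1-\mu}{p^{\ast}}$, so that $\|u_n\|_{L^p(D)}\leq\|u_n\|_{L^2(D)}^{\mu}\,\|u_n\|_{L^{p^{\ast}}(D)}^{1-\mu}$; the first factor is uniformly bounded by the embedding $H_D^s\hookrightarrow L^2(D)$ in Lemma~\ref{emb}, and the second factor tends to $0$ by the previous step. The principal obstacle is the passage from the local Hölder bound to a summable global estimate: this requires the exponent $2_s^*(1-\theta)$ on the local $H^s$-norms to be at least $2$, which is what forces the appearance of the threshold $p^{\ast}$ and makes the interpolation bootstrap necessary to cover the range $(2,p^{\ast})$.
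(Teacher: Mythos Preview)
The paper does not prove this lemma; it simply quotes it from \cite{alv3} (Alves--Torres Ledesma), so there is no in-paper argument to compare your attempt against.

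On its own merits your proof is a correct adaptation of Lions' classical vanishing argument to the fractional exterior-domain setting. The local H\"older interpolation with $\theta=\dfrac{2_s^*-p}{2_s^*-2}$ is right; the device of writing, for $2_s^*(1-\theta)\geq 2$,
\[
\|u_n\|_{H^s(U(y_k,T))}^{2_s^*(1-\theta)}\leq \|u_n\|_{H_D^s}^{\,2_s^*(1-\theta)-2}\,\|u_n\|_{H^s(U(y_k,T))}^{2}
\]
and then summing the squares via the bounded overlap of the cover is exactly what is needed, and your threshold $p^\ast=4-4/2_s^*$ is computed correctly; the final global interpolation between $L^2(D)$ and $L^{p^\ast}(D)$ legitimately covers the remaining range $(2,p^\ast)$.

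Two small points you should make explicit in a final write-up. First, the uniformity of the local Sobolev constant on the patches $U(y_k,T)=B(y_k,T)\cap D$: for balls contained in $D$ it follows by translation invariance, while only finitely many patches meet $\partial D$ (which is bounded and smooth), each such patch is a Lipschitz extension domain, and the maximum of the finitely many constants gives the uniform bound. Second, the summability $\sum_k\|u_n\|_{H^s(U(y_k,T))}^2\leq C_0\|u_n\|_{H_D^s}^2$ uses that each pair $(x,y)\in D\times D$ lies in at most $C_0$ of the products $U(y_k,T)\times U(y_k,T)$, which follows from the bounded overlap of the balls; this is implicit in your sketch but worth stating.
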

\noindent The next lemma proves that the functional $\bar{I}$ satisfies the Mountain pass geometry.
\begin{lemma}\label{MP_geometry}
The functional $\bar{I}$ verifies the mountain pass geometry for $\lambda\in(0,\lambda_0)$ with $\lambda_0<\infty$.
\end{lemma}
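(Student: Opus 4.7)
My plan is to apply Definition~\ref{MP_thm} with base point $u_0=0$. Since $\bar{I}(0)=0$, I must produce a radius $R>0$ and level $c>0$ with $\bar{I}\ge c$ on the sphere $\{\|v\|_s=R\}$, together with a point $u_1$ outside $B_R(0)$ on which $\bar{I}$ is already below $c$. The key technical input is a pointwise estimate for $\bar{F}$ derived directly from the cut-off definition: for every $t\ge 0$,
\[
\bar{F}(x,t)\le C\lambda |t|^{1-\gamma}\chi_{\{t>\underline{u}_{\lambda}\}} + C|t|^{p+1}\chi_{\{t>\underline{u}_{\lambda}\}} + \lambda \underline{u}_{\lambda}(x)^{1-\gamma} + C\underline{u}_{\lambda}(x)^{p+1},
\]
where on $\{t\le\underline{u}_{\lambda}\}$ one bounds $\lambda \underline{u}_{\lambda}^{-\gamma}t\le\lambda \underline{u}_{\lambda}^{1-\gamma}$ and $\underline{u}_{\lambda}^{p}t\le\underline{u}_{\lambda}^{p+1}$, while on $\{t>\underline{u}_{\lambda}\}$ integration recovers $\frac{\lambda}{1-\gamma}t^{1-\gamma}+\frac{1}{p+1}t^{p+1}$ plus a term absorbed into $C\underline{u}_{\lambda}^{p+1}$.

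For the lower bound on the sphere, I will integrate this inequality over $\mathbb{R}^N\setminus\Omega$. Lemma~\ref{emb} gives $\int|u|^{p+1}\,dx\le C\|u\|_s^{p+1}$, and the subcritical contribution $\int|u|^{1-\gamma}\chi_{\{u>\underline{u}_{\lambda}\}}\,dx$ is treated via the pointwise bound $|u|^{1-\gamma}\le |u|\,\underline{u}_{\lambda}^{-\gamma}$ (valid on $\{u>\underline{u}_{\lambda}\}$) followed by Young's inequality together with interpolation, yielding an expression of order $\|u\|_s^{1-\gamma}$ up to a term controlled by a fixed power of $\|\underline{u}_{\lambda}\|_s$. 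To force the residuals $\lambda\|\underline{u}_{\lambda}\|_{1-\gamma}^{1-\gamma}$ and $\|\underline{u}_{\lambda}\|_{p+1}^{p+1}$ to be $o_\lambda(1)$ as $\lambda\to 0^+$, I will test \eqref{auxprob_appendix} against $\underline{u}_{\lambda}$, producing the identity $\|\underline{u}_{\lambda}\|_s^2=\lambda\|\underline{u}_{\lambda}\|_{1-\gamma}^{1-\gamma}$, and compare the energy $E(\underline{u}_{\lambda})$ with $E(\varepsilon v_0)$ optimized in $\varepsilon$, giving the scaling $\|\underline{u}_{\lambda}\|_s=O(\lambda^{1/(1+\gamma)})$. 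Putting the pieces together I expect
\[
\bar{I}(u)\ge \tfrac{1}{2}\|u\|_s^2 - C\lambda \|u\|_s^{1-\gamma} - C\|u\|_s^{p+1} - o_\lambda(1).
\]
Because $1-\gamma<2<p+1$, I first choose $R>0$ so that $\tfrac14 R^2>CR^{p+1}$, and then $\lambda_0>0$ small enough that $C\lambda R^{1-\gamma}+o_\lambda(1)<R^2/8$ for $\lambda\in(0,\lambda_0)$, which delivers $c:=R^2/8>0$ on $\partial B_R(0)$.

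For the point $u_1$ I fix any nonnegative $\phi\in C_c^{\infty}(\mathbb{R}^N\setminus\Omega)$ with $\phi\not\equiv 0$ and examine the ray $t\phi$. For $t$ large, $t\phi>\underline{u}_{\lambda}$ on $\mathrm{supp}(\phi)$, whence
\[
\bar{I}(t\phi)\le \tfrac{t^2}{2}\|\phi\|_s^2-\tfrac{t^{p+1}}{p+1}\int_{\mathrm{supp}(\phi)}\phi^{p+1}\,dx+O(1)\xrightarrow[t\to\infty]{}-\infty,
\]
so picking $t_0$ large enough that $\|t_0\phi\|_s>R$ and $\bar{I}(t_0\phi)<c$ and setting $u_1=t_0\phi$ completes the verification of Definition~\ref{MP_thm}.

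I expect the principal obstacle to be the control of $\int|u|^{1-\gamma}\,dx$ in the sphere estimate: the exponent $1-\gamma$ sits below the natural Sobolev range $[2,2_s^*]$ for $H_{\Omega^c}^s$, and the domain $\mathbb{R}^N\setminus\Omega$ is unbounded, so no direct embedding is available. Circumventing this obstruction is precisely what the cut-off structure is designed for: the singular contribution to $\bar{F}$ is localized on $\{u>\underline{u}_{\lambda}\}$, where one may trade a negative power of $u$ for one of $\underline{u}_{\lambda}\ge\varepsilon_\lambda v_0>0$, and the resulting $\lambda$-dependent quantities are tamed by the smallness $\|\underline{u}_{\lambda}\|_s=O(\lambda^{1/(1+\gamma)})$ — the very reason the lemma requires $\lambda$ to be restricted to a finite range $(0,\lambda_0)$.
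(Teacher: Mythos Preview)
Your approach differs substantially from the paper's. The paper proceeds very directly: it asserts, ``by Sobolev embedding,'' the lower bound
\[
\bar{I}(u)\geq \tfrac12\|u\|_{H^s_{\Omega^c}}^2-\tfrac{C_1\|P\|_\infty}{p+1}\|u\|_{H^s_{\Omega^c}}^{p+1}-\tfrac{\lambda C_2}{1-\gamma}\|u\|_{H^s_{\Omega^c}}^{1-\gamma},
\]
and then analyzes the resulting one-variable function of $r=\|u\|$ to extract $(\lambda_0,r)$. For the second condition, the paper simply writes $\bar{I}(tu)=\tfrac{t^2}{2}\|u\|^2-\tfrac{\lambda t^{1-\gamma}}{1-\gamma}\int|u|^{1-\gamma}-\tfrac{t^{p+1}}{p+1}\int P|u|^{p+1}$ and lets $t\to\infty$. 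In effect, the paper treats $\bar{I}$ as if it were $I$, without engaging the cut-off structure at all; no residual terms involving $\underline{u}_\lambda$ appear, and no smallness $\|\underline{u}_\lambda\|_s=O(\lambda^{1/(1+\gamma)})$ is used.

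Your route is the more honest one: you work directly with $\bar F$, split according to $\{|u|\gtrless\underline{u}_\lambda\}$, and isolate $\underline{u}_\lambda$-dependent residuals that you then kill with the scaling of $\underline{u}_\lambda$. This addresses precisely the difficulty you flag in your last paragraph, namely that $1-\gamma$ lies outside the Sobolev range on the unbounded domain $\mathbb{R}^N\setminus\Omega$, which the paper's one-line ``Sobolev embedding'' glosses over. Your choice of a compactly supported $\phi$ for the ray is also cleaner than the paper's use of a generic $u$, since it makes the large-$t$ regime $t\phi>\underline{u}_\lambda$ on $\mathrm{supp}(\phi)$ transparent.

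That said, one step of your sphere estimate needs tightening. From the pointwise bound $|u|^{1-\gamma}\le |u|\,\underline{u}_\lambda^{-\gamma}$ on $\{u>\underline{u}_\lambda\}$ it is not clear how ``Young's inequality together with interpolation'' produces a term of order $\|u\|_s^{1-\gamma}$ plus a power of $\|\underline{u}_\lambda\|_s$: the weight $\underline{u}_\lambda^{-\gamma}$ is unbounded at infinity and is not controlled by any $L^q$-norm of $\underline{u}_\lambda$. A safer way to close the estimate is to use instead, on $\{u>\underline{u}_\lambda\}$, the bound $|u|^{1-\gamma}\le |u|^2\,\underline{u}_\lambda^{-(1+\gamma)}\le (\epsilon_\lambda v_0)^{-(1+\gamma)}|u|^2$ (with $v_0$ bounded below on $\Omega^c$ as in Lemma~\ref{existence_positive_soln}), which gives a contribution $\lambda\,C(\epsilon_\lambda)\|u\|_s^{2}$ that can be absorbed into the quadratic term for $\lambda$ small; this still forces a $\lambda_0$-restriction and leads to the same conclusion, but with exponent $2$ rather than $1-\gamma$ in the absorbed term. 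If you insist on the exponent $1-\gamma$, you should spell out exactly which Young/interpolation step yields it.
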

\begin{proof}
Since $p\in\left(1,\frac{N+2s}{N-2s}\right)$ and $P$ is bounded, hence, by Sobolev embedding we obtain$$\bar{I}(u)\geq\frac{1}{2}\|u\|_{H_{\Omega^c}^s}^2-\frac{C_1\|P\|_{\infty}}{p+1}\|u\|_{H_{\Omega^c}^s}^{p+1}-\frac{\lambda C_2}{1-\gamma}\|u\|_{H_{\Omega^c}^s}^{1-\gamma}$$
where $C_1, C_2>0$ are uniform constants that are independent of the choice of $u$. Now for a small $\lambda>0$, say $\lambda_0$, we have that $\frac{1}{2}\|u\|_{H_{\Omega^c}^s}^2-\frac{\lambda C_2}{1-\gamma}\|u\|_{H_{\Omega^c}^s}^{1-\gamma}>0$. Note that, this positivity holds for any $\lambda\in(0,\lambda_0)$. For a sufficiently small $\|u\|_{H_{\Omega^c}^s}=r$, we further have $a(r)=\frac{1}{2}r^2-\frac{C_1\|P\|_{\infty}}{p+1}r^{p+1}-\frac{\lambda C_2}{1-\gamma}r^{1-\gamma}>0$. Therefore, to sum it up we have a pair $(\lambda,r)$ such that 
$$\bar{I}(u)\geq a(r)>0$$
for any $\lambda\in(0,\lambda_0)$ and for every $u$ such that $\|u\|_{H_{\Omega^c}^s}=r$. On the other hand, taking $u\in H_{\Omega^c}^s\setminus\{0\}$ and $t\geq 0$ we have 
$$\bar{I}(tu)=\frac{t^2}{2}\|u\|_{H_{\Omega^c}^s}^2-\frac{\lambda t^{1-\gamma}}{1-\gamma}\int_{\Omega^c}|u|^{1-\gamma}dx-\frac{t^{p+1}}{p+1}\int_{\Omega^c}P(x)|u|^{p+1}.$$
Since $p+1>2>1-\gamma$, we have $\bar{I}(tu)\rightarrow-\infty$ as $t\rightarrow\infty$. This verifies the second condition of the Mountain pass theorem.
\end{proof}
\begin{remark}\label{obs_pos} We now perform an apriori analysis on a solution (if it exists). Suppose $u$ is a solution to \eqref{main}, then we observe the following
\begin{enumerate}
	\item $I(u)=I(|u|)$. This implies that $u^-=0$ a.e. in $\Omega^c$.
	\item In fact $u>0$ a.e. in $\Omega^c$ due to the presence of the singular term.
	\end{enumerate}
Thus without loss of generality, we assume that the solution is positive. 
\end{remark}
Precisely, we now have the following result.
\begin{lemma}[Apriori analysis]\label{u_greater_u_lambda}
	Fix a $\lambda\in(0,\lambda_0)$. Then a solution of \eqref{main}, say $u>0$, is such that $u\geq \underline{u}_{\lambda}$ a.e. in $\mathbb{R}^N\setminus\Omega$.
\end{lemma}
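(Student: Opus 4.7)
The plan is to prove the inequality by a weak comparison principle, testing the difference of the two equations against the positive part $(\underline{u}_{\lambda}-u)^+$. Since $\underline{u}_\lambda$ is the unique weak solution of the purely singular auxiliary problem \eqref{auxprob_appendix} and $u$ weakly solves \eqref{main}, both satisfy analogous identities in the Hilbert space $H_{\Omega^c}^s$. Subtracting them and choosing the test function $\varphi=(\underline{u}_\lambda-u)^+$, which belongs to $H_{\Omega^c}^s$ since this space is stable under the positive-part operation, I will show that one side is nonnegative while the other is nonpositive, forcing $(\underline{u}_\lambda-u)^+\equiv 0$.

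More precisely, writing the two weak formulations and subtracting yields
\begin{align*}
\langle \underline{u}_\lambda-u,\varphi\rangle_{H_{\Omega^c}^s}
=\lambda\int_{\Omega^c}\bigl(\underline{u}_\lambda^{-\gamma}-u^{-\gamma}\bigr)\varphi\,dx
-\int_{\Omega^c}P(x)\,u^{p}\,\varphi\,dx.
\end{align*}
For the left-hand side, I will use the pointwise inequality $(a-b)(a^+-b^+)\ge (a^+-b^+)^2$, applied to $a=\underline{u}_\lambda(x)-u(x)$ and $b=\underline{u}_\lambda(y)-u(y)$, on the double integral, together with the identity $(\underline{u}_\lambda-u)(\underline{u}_\lambda-u)^+=[(\underline{u}_\lambda-u)^+]^2$ on the $L^2$-part, obtaining the bound $\langle \underline{u}_\lambda-u,\varphi\rangle_{H_{\Omega^c}^s}\ge \|(\underline{u}_\lambda-u)^+\|_s^2\ge 0$. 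For the right-hand side, on the set $\{\underline{u}_\lambda>u\}$ the monotonicity of $t\mapsto t^{-\gamma}$ gives $\underline{u}_\lambda^{-\gamma}-u^{-\gamma}\le 0$, and since $P(x)\ge\tilde P>0$ and $u>0$ by Remark \ref{obs_pos}, the second term is nonpositive as well. Consequently $\|(\underline{u}_\lambda-u)^+\|_s=0$, giving $u\ge\underline{u}_\lambda$ a.e.\ in $\mathbb{R}^N\setminus\Omega$ as required.

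The main obstacle I anticipate is justifying that $\varphi=(\underline{u}_\lambda-u)^+$ is an admissible test function for the weak formulation of \eqref{main}, in particular that the singular integrand $u^{-\gamma}\varphi$ is well-defined and integrable. This is handled by observing that $\varphi$ is supported on $\{\underline{u}_\lambda>u\}$, so $0<u<\underline{u}_\lambda$ there and hence $u^{-\gamma}\varphi\le\underline{u}_\lambda^{1-\gamma}$, which is integrable in view of $\underline{u}_\lambda\in H_{\Omega^c}^s\hookrightarrow L^{1-\gamma}_{\text{loc}}\cap L^{p+1}$ and the control on the singular term already used in Lemma \ref{existence_positive_soln}. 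Once this admissibility is verified, the entire argument reduces to the elementary sign analysis described above, with the strong positivity lower bound $\underline{u}_\lambda\ge\epsilon_\lambda v_0>0$ from Lemma \ref{existence_positive_soln} ensuring that the comparison is nontrivial.
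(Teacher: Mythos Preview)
Your proposal is correct and follows essentially the same comparison-principle argument as the paper: subtract the two weak formulations, test against $(\underline{u}_\lambda-u)^+$ (equivalently, restrict to $\underline{\Omega}=\{u<\underline{u}_\lambda\}$ as the paper does), use the monotonicity of $t\mapsto t^{-\gamma}$ together with $P(x)u^{p}\ge 0$ to make the right-hand side nonpositive, and the nonlocal monotonicity (what the paper calls Simon's inequality, and what you write as $(a-b)(a^+-b^+)\ge(a^+-b^+)^2$) to make the left-hand side nonnegative. Your version is in fact more carefully justified than the paper's, since you explicitly address the admissibility of $(\underline{u}_\lambda-u)^+$ as a test function for the singular term.
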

\begin{proof}
	Fix $\lambda\in(0,\lambda_0)$ and let $u\in H_{\Omega^c}^s$ be a positive solution to \eqref{main} and $\underline{u}_{\lambda}>0$ be a solution to \eqref{auxprob_appendix}. We will show that $u\geq \underline{u}_{\lambda}$ a.e. in $\mathbb{R}^N\setminus\Omega$. Indeed, let $\underline{\Omega}=\{x\in\mathbb{R}^N\setminus\Omega:u(x)<\underline{u}_{\lambda}(x)\}$ and from the equation satisfied by $u$, $\underline{u}_{\lambda}$, we have 
	\begin{align}\label{comp_1}
	\langle(-\Delta)^s\underline{u}_{\lambda}-(-\Delta)^su,\underline{u}_{\lambda}-u\rangle_{\underline{\Omega}}+\int_{\underline{\Omega}}|\underline{u}_{\lambda}-u|^2dx&\leq\lambda\int_{\underline{\Omega}}(\underline{u}_{\lambda}^{-\gamma}-u^{-\gamma})(\underline{u}_{\lambda}-u)dx\leq 0.
	\end{align}
	Further, by the Simon's inequality we have 
	\begin{align}\label{comp_2}
	\langle(-\Delta)^s\underline{u}_{\lambda}-(-\Delta)^su,\underline{u}_{\lambda}-u\rangle_{\underline{\Omega}}&\geq 0. 
	\end{align}
	Hence, from \eqref{comp_1} and \eqref{comp_2}, we obtain $u\geq\underline{u}_{\lambda}$ a.e. in $\Omega^c$.
\end{proof}
The Lemma \eqref{MP_geometry} allows us to apply the Mountain pass theorem without the Palais-Smale condition (Definition \ref{PS}) to find a sequence $(u_n)\subset H_{\Omega^c}^s$ such that 
\begin{align}\label{ps_2}\bar{I}(u_n)\rightarrow c_1~\text{and}~\bar{I}'(u_n)\rightarrow 0\end{align}
where
\begin{align}\label{ps_3}c_1&=\underset{u\in H_{\Omega^c}^s\setminus\{0\}}{\inf}\underset{t\geq 0}{\sup}\bar{I}(tu).\end{align}
Moreover, we further have 
\begin{align}\label{ps_4}c_1&=\underset{u\in\mathcal{N}}{\inf}\bar{I}(u)\end{align}
where 
\begin{align}\label{ps_5}\mathcal{N}&=\{u\in H_{\Omega^c}^s\setminus\{0\}:\langle\bar{I}'(u),u\rangle=0\}\end{align}
is called a Nehari manifold. Henceforth, we say that $u\in H_{\Omega^c}^s$ is a ground state solution to \eqref{main_mod} when 
\begin{align}\label{ps_1}\bar{I}(u)=c_1~\text{and}~\bar{I}'(u)=0.\end{align}
We further recall that by a {\it ground state} solution we mean that a function $\tilde{u}$
\begin{lemma}\label{energy_comp}
Suppose $(P_1)$ holds, then $$0<c_1<c_{\infty}$$
whenever $\lambda\in(0,\lambda_0)$.
\end{lemma}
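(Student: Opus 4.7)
The lemma contains two claims which I would treat separately.

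\textbf{Positivity of $c_1$.} This follows immediately from the mountain pass geometry of Lemma \ref{MP_geometry}, which supplies $r>0$ with $\bar I(u)\ge a(r)>0$ whenever $\|u\|_{H_{\Omega^c}^s}=r$ and $\lambda\in(0,\lambda_0)$. Since $\bar I(0)=0$ and $\bar I(tu)\to-\infty$ along any fixed ray, every path $\delta\in\mathcal P$ joining the origin to a large-$t$ point must cross the sphere of radius $r$. The min-max formula \eqref{ps_3} then yields $c_1\ge a(r)>0$.

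\textbf{The strict inequality $c_1<c_\infty$.} The plan is to exhibit a test function that, fed into \eqref{ps_3}, drives the min-max value strictly below $c_\infty$. Let $w_\infty\in H^s(\mathbb R^N)$ be a positive ground state of $\bar I_\infty$ (so $\bar I_\infty(w_\infty)=c_\infty$ and the fibering map $t\mapsto\bar I_\infty(tw_\infty)$ attains its unique maximum $c_\infty$ at $t=1$), and consider the translates $v_y(x)=w_\infty(x-y)$, viewed as elements of $H_{\Omega^c}^s$ through the continuous embedding $H^s(\mathbb R^N)\hookrightarrow H_{\Omega^c}^s$ of Lemma \ref{emb}(2). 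By translation invariance of $\bar I_\infty$,
\begin{align*}
\bar I(tv_y)-\bar I_\infty(tw_\infty)
 &= -\tfrac{t^2}{4}\iint_{\Omega\times\Omega}\tfrac{|v_y(x)-v_y(z)|^2}{|x-z|^{N+2s}}\,dz\,dx-\tfrac{t^2}{2}\int_{\Omega}|v_y|^2\,dx\\
 &\quad +\int_{\Omega}\bar G(tv_y)\,dx+\tfrac{1}{p+1}\int_{\Omega^c}\bigl(\tilde P-P(x)\bigr)(tv_y)^{p+1}\,dx.
\end{align*}
The first two terms constitute a non-positive kinetic/mass correction produced by the nonlocal Neumann formulation, and the last term is non-positive by $(P_1)$; the middle term, which is the only contribution of opposite sign, reflects the absence of the nonlinear integral on $\Omega$ for $\bar I$. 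I would then choose $y$ so that either the nonlinear gain $\int_{\Omega^c}(\tilde P-P)v_y^{p+1}$ or the kinetic/mass saving strictly dominates the positive $\Omega$-integral, giving $\bar I(tv_y)<\bar I_\infty(tw_\infty)$ uniformly in $t$ on a neighbourhood of the Nehari projection $t_y$ of $v_y$ onto $\mathcal N$. Taking the sup in $t$ and invoking \eqref{ps_3} delivers $c_1<c_\infty$.

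\textbf{Main obstacle.} The hard step is the sign of the balance in the decomposition above: as $|y|\to\infty$ every one of the four terms tends to $0$, so a quantitative comparison of decay rates is required. This uses the known polynomial decay $w_\infty(x)\sim|x|^{-(N+2s)}$ of fractional ground states on $\mathbb R^N$, together with $(P_1)$. If the direct estimate turns out to be too fragile (for instance in the degenerate situation $P\equiv\tilde P$), a clean fall-back is an argument by contradiction: suppose $c_1=c_\infty$; extract via the Mountain Pass theorem a sequence $(u_n)\subset H_{\Omega^c}^s$ as in \eqref{ps_2}; apply Lemma \ref{Lions_type} to rule out vanishing; and use a concentration–compactness decomposition together with $P\ge\tilde P$ to reach a contradiction with the supposed equality.
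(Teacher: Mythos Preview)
Your strategy is the paper's: both build trial functions from translates $u_n(x)=\bar u(x-\alpha_n)$ of a ground state $\bar u$ of $(P_\infty)$ and compare $\bar I$ with $\bar I_\infty$ through the same correction terms (the paper's $a_n$ and $b_n$ in \eqref{cond_5}--\eqref{cond_12}). The execution, however, diverges at exactly the point you label the ``main obstacle''. The paper does not attempt a decay-rate comparison. It fixes $\alpha_n=(n,0,\dots,0)$, introduces the Nehari projections $\gamma_n$ of $u_n$ for $\bar I$, proves by dominated convergence (\eqref{cond_7}--\eqref{cond_11}) that $\gamma_n\to1$, and then asserts that $a_n\to0$ while $b_n\to\frac{1}{p+1}\int_{\mathbb R^N}(\tilde P-P(x))|\bar u|^{p+1}\,dx<0$, so \eqref{cond_12} yields $c_1<c_\infty$ directly. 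The step your sketch omits is precisely this control of the Nehari scaling: without proving that $t_y\to1$ (or at least stays in a compact subset of $(0,\infty)$), your ``sup in $t$'' cannot be reduced to a comparison near $t=1$.

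Two further remarks. Your claim that all four correction terms vanish as $|y|\to\infty$ is actually in tension with the paper's stated limit for $b_n$: after the change of variables the integrand $(\tilde P-P(x+\alpha_n))|\bar u|^{p+1}$ tends to $0$ pointwise under $(P_1)$, so DCT would give $b_n\to0$ rather than the strictly negative value the paper records --- your diagnosis of the difficulty is thus arguably sharper than the paper's write-up, but you leave the resolution open. And your concentration--compactness fallback cannot be used here: in this paper the inequality $c_1<c_\infty$ is the \emph{input} to the Lions-type step (Lemma~\ref{Lions_type}) in the proof of Theorem~\ref{main_thm}, so deriving the present lemma by that route would be circular.
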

\begin{proof}
Let $\overline{u}$ be a nontrivial ground state solution of $(P_{\infty})$ and define $u_n(x)=\overline{u}(x-\alpha_n)$ where $\alpha_n=(n,0,\cdots,0)\in\mathbb{R}^N$. From \eqref{ps_2}, 
\begin{align}\label{cond_1}
c_1&=\underset{t\geq 0}{\max}\{\bar{I}(tu)\}.
\end{align}
We have for every $t>0$ consider the function
\begin{align}\label{cond_2}
f(t)&=\frac{t^2}{2}\|u_n\|_{H_{\Omega^c}^s}^2-\frac{\lambda t^{1-\gamma}}{1-\gamma}\int_{\Omega^c}|u_n|^{1-\gamma}dx-\frac{t^{p+1}}{p+1}\int_{\Omega^c}P(x)|u_n|^{p+1}dx.
\end{align}
It is clear that $f(0)=0$, $f(t)>0$ for $t$ small enough and $f(t)<0$ for $t$ large enough. Therefore, there exists a unique $\gamma_n\in(0,\infty)$ such that
\begin{align}\label{cond_3}
f(\gamma_n)&=\bar{I}(\gamma_n u_n)=\underset{t\geq 0}{\max}\{\bar{I}(tu_n)\}.
\end{align}
Therefore, $f'(\gamma_n)=0$ which amounts to saying that
\begin{align}\label{cond_4}
\frac{1}{2}\iint_{\mathbb{R}^{2N}\setminus\Omega^2}\frac{|u_n(x)-u_n(y)|^2}{|x-y|^{N+2s}}dydx+\int_{\Omega^c}|u_n|^2dx&=\lambda \gamma_n^{-1-\gamma}\int_{\Omega^c}|u_n|^{1-\gamma}dx+\gamma_n^{p-1}\int_{\Omega^c}P(x)|u_n|^{p+1}dx.
\end{align}
From the definition of $c_1$ given in \eqref{ps_4} we obtain
\begin{align}\label{cond_5}
\begin{split}
c_1\leq&\bar{I}(\gamma_n u_n)\\
=&\bar{I}_{\infty}(u)-\frac{\gamma_n^2}{2}\left(\frac{1}{2}\iint_{\Omega\times\Omega}\frac{|u_n(x)-u_n(y)|^2}{|x-y|^{N+2s}}dydx+\int_{\Omega}|u_n|^2dx\right)\\
&+\frac{\gamma_n^{p+1}}{p+1}\int_{\mathbb{R}^{N}\setminus\Omega}(\tilde{P}-P(x))|u_n|^{p+1}dx+\frac{\gamma_n^{p+1}}{p+1}\int_{\Omega}\tilde{P}|u_n|^{p+1}dx+\frac{\lambda\gamma_n^{1-\gamma}}{1-\gamma}\int_{\Omega}|u_n|^{1-\gamma}dx\\
=&\bar{I}_{\infty}(\gamma_n u_n)-\frac{a_n\gamma_n^2}{2}+\frac{\gamma_n^{p+1}}{p+1}\int_{\Omega}\tilde{P}|u_n|^{p+1}dx+\frac{\gamma_n^{p+1}}{p+1}\int_{\mathbb{R}^{N}\setminus\Omega}(\tilde{P}-P(x))|u_n|^{p+1}dx\\
&+\frac{\lambda\gamma_n^{1-\gamma}}{1-\gamma}\int_{\Omega}|u_n|^{1-\gamma}dx
\end{split}
\end{align}
where 
$$a_n=\frac{1}{2}\iint_{\Omega\times\Omega}\frac{|u_n(x)-u_n(y)|^2}{|x-y|^{N+2s}}dydx+\int_{\Omega}|u_n|^2dx.$$
We found from \eqref{cond_4} that $(\gamma_n)$ is bounded. For if not, then there exists a subsequence of $(\gamma_n)$, still denoted as $(\gamma_n)$, such that $\gamma_n\rightarrow\infty$. Moreover, as $n\rightarrow\infty$, we have 
\begin{align}\label{cond_6}\frac{1}{2}\iint_{\mathbb{R}^{2N}\setminus\Omega^2}\frac{|u_n(x)-u_n(y)|^2}{|x-y|^{N+2s}}dydx+\int_{\Omega^c}|u_n|^2dx&\rightarrow \|\bar{u}\|^2.\end{align}
Employing this in \eqref{cond_4} leads to an absurdity that $\|\bar{u}\|^2=\infty$. Therefore, $(\gamma_n)$ is bounded.Thus, up to a subsequence we have $\gamma_n\rightarrow\gamma_0$. We now claim that $\gamma_0=1$.\\
On making a change of variables $\tilde{x}=x-\alpha_n$, $\tilde{y}=y-\alpha_n$, we see that 
\begin{align}\label{cond_7}
\begin{split}
\|u_n\|_{H_{\Omega^c}^s}^2&=\frac{1}{2}\iint_{\mathbb{R}^{2N}}\chi_{\mathbb{R}^{2N}\setminus\Omega^2}(x,y)\frac{|\bar{u}(x-\alpha_n)-\bar{u}(y-\alpha_n)|^2}{|x-y|^{N+2s}}dydx+\int_{\mathbb{R}^N}\chi_{\mathbb{R}^N\setminus\Omega}|\bar{u}(x-\alpha_n)|^2dx\\
&=\frac{1}{2}\iint_{\mathbb{R}^{2N}}\chi_{\mathbb{R}^{2N}\setminus\Omega^2}(x+\alpha_n,y+\alpha_n)\frac{|\bar{u}(x)-\bar{u}(y)|^2}{|x-y|^{N+2s}}dydx+\int_{\mathbb{R}^N}\chi_{\mathbb{R}^N\setminus\Omega}(x+\alpha_n)|\bar{u}(x)|^2dx.
\end{split}
\end{align}
Since $|\alpha_n|\rightarrow\infty$,
$$\chi_{\mathbb{R}^{2N}\setminus\Omega^2}(x+\alpha_n,y+\alpha_n)\frac{|\bar{u}(x)-\bar{u}(y)|^2}{|x-y|^{N+2s}}\rightarrow \frac{|\bar{u}(x)-\bar{u}(y)|^2}{|x-y|^{N+2s}}~\text{a.e.}~(x,y)\in\mathbb{R}^{N}\times\mathbb{R}^N$$
and 
$$\chi_{\mathbb{R}^N\setminus\Omega}(x+\alpha_n)|\bar{u}(x)|^2\rightarrow|\bar{u}(x)|^2~\text{a.e.}~x\in\mathbb{R}^N.$$
Furthermore, 
$$\left|\chi_{\mathbb{R}^{2N}\setminus\Omega^2}(x+\alpha_n,y+\alpha_n)\frac{|\bar{u}(x)-\bar{u}(y)|^2}{|x-y|^{N+2s}}\right|\leq \frac{|\bar{u}(x)-\bar{u}(y)|^2}{|x-y|^{N+2s}}\in L^1(\mathbb{R}^{N}\times\mathbb{R}^N)$$
and
$$|\chi_{\mathbb{R}^N\setminus\Omega}(x+\alpha_n)|\bar{u}(x)|^2|\leq\bar{u}(x)|^2\in L^1(\mathbb{R}^N).$$
Thus, by the Lebesgue's dominated convergence theorem we have 
\begin{align}\label{cond_8}
\|u_n\|_{H_{\Omega^c}^s}^2\rightarrow\|\bar{u}\|~\text{as}~n\rightarrow\infty
\end{align}
and thus 
\begin{align}\label{cond_8'}
\gamma_n^{1+\gamma}\|u_n\|_{H_{\Omega^c}^s}^2\rightarrow\gamma_0^{1+\gamma}\|\bar{u}\|~\text{as}~n\rightarrow\infty.
\end{align}
By condition $(P_1)$ we have,
$$\gamma_n^{p+\gamma}\chi_{\mathbb{R}^N\setminus\Omega}(x+\alpha_n)P(x+\alpha_n)|\bar{u}(x)|^{p+1}\rightarrow\gamma_0^{p+\gamma}|\tilde{u}(x)|^{p+1}~\text{a.e.}~x\in\mathbb{R}^N$$
and by the boundedness of $P$ and $(\gamma_n)$ we obtain
\begin{align}\label{cond_9}
|\gamma_n^{p+\gamma}\chi_{\mathbb{R}^N\setminus\Omega}(x+\alpha_n)P(x+\alpha_n)|\bar{u}(x)|^{p+1}|\leq\gamma_0^{p+\gamma}M|\tilde{u}(x)|^{p+1}~\text{a.e.}~x\in\mathbb{R}^N\in L^1(\mathbb{R}^N).
\end{align}
Thus by the Lebesgue's dominated convergence we have 
\begin{align}\label{cond_10}
\gamma_n^{p+\gamma}\int_{\mathbb{R}^N\setminus\Omega}P(x)|u_n|^{p+1}dx\rightarrow\gamma_0^{p+\gamma}\int_{\mathbb{R}^N}\tilde{P}|\bar{u}(x)|^{p+1}dx.
\end{align}
Similarly, on the other hand,
$$\lambda \chi_{\mathbb{R}^N\setminus\Omega}(x+\alpha_n)|\bar{u}(x)|^{1-\gamma}\rightarrow\lambda|\bar{u}(x)|^{1-\gamma}~\text{a.e.}~x\in\mathbb{R}^N$$
and
$$\int_{\mathbb{R}^N}\lambda \chi_{\mathbb{R}^N\setminus\Omega}(x+\alpha_n)|\bar{u}(x)|^{1-\gamma}\leq\int_{\mathbb{R}^N}\lambda|\bar{u}(x)|^{1-\gamma}~\text{a.e.}~x\in\mathbb{R}^N.$$
Needless to say, by the Lebesgue's dominated convergence theorem again we have
\begin{align}\label{cond_11}
\int_{\mathbb{R}^N\setminus\Omega}|u_n|^{1-\gamma}dx\rightarrow\int_{\mathbb{R}^N}|\bar{u}(x)|^{1-\gamma}dx.
\end{align}
Since, $\bar{u}$ is a solution to $(P_{\infty})$ and together with the limits \eqref{cond_8}, \eqref{cond_10} and \eqref{cond_11} gives $\gamma_0=1$. Further, by \eqref{cond_5} we also have
\begin{align}\label{cond_12}
c_1&\leq \bar{I}_{\infty}(\bar{u})-\frac{t_n\gamma_n^2}{2}+s_n=c_{\infty}-\frac{t_n\gamma_n^2}{2}+s_n.
\end{align}
Here, $$b_n=\frac{\gamma_n^{p+1}}{p+1}\left(\int_{\Omega}\tilde{P}|u_n|^{p+1}dx+\int_{\mathbb{R}^{N}\setminus\Omega}(\tilde{P}-P(x))|u_n|^{p+1}dx\right)+\frac{\lambda\gamma_n^{1-\gamma}}{1-\gamma}\int_{\Omega}|u_n|^{1-\gamma}dx.$$
 We will show that $a_n\rightarrow 0$ and $b_n\rightarrow\frac{\gamma_0^{p+1}}{p+1}\int_{\mathbb{R}^{N}}(\tilde{P}-P(x))|\bar{u}|^{p+1}dx<0$ as $n\rightarrow\infty$ which is sufficient to show that $c_1<c_{\infty}$.\\
 In order to verify this, we first note that $\bar{u}\in H^s(\mathbb{R}^N)$ and this gives $a_n\rightarrow 0$ as $n\rightarrow\infty$. Using similar argument, we get $b_n\rightarrow\frac{\gamma_0^{p+1}}{p+1}\int_{\mathbb{R}^{N}}(\tilde{P}-P(x))|\bar{u}|^{p+1}dx<0$.
 Therefore, from \eqref{cond_12} we have $c_1<c_{\infty}$.
\end{proof}
\begin{proof}[{\bf Proof of Theorem \ref{main_thm}}]
From \eqref{ps_2} there exists a sequence $(u_n)\subset H_{\Omega^c}^s$ such that 
$$\bar{I}(u_n)\rightarrow c_1~\text{and}~\bar{I}'(u_n)\rightarrow 0~\text{as}~n\rightarrow\infty.$$
Now since $(u_n)$ is bounded, there exists a subsequence still denoted by $(u_n)$, such that $u_n\rightharpoonup u$ for some $u\in H_{\Omega^c}^s$ and $\bar{I}'(u)=0$. The condition that $\bar{I}'(u_n)\rightarrow 0$ implies that each $u_n$ cannot be zero over a non-zero subset of $\Omega^c$. For if it does, then it leads to a contradiction that $\bar{I}(u_n)$ is finite. We claim that $u\neq 0$. Let us assume on the contrary that $u=0$. Since $c_1>0$, the Lemma \ref{Lions_type} guarantees the existence of $\rho,\beta>0$ and $(z_n)\subset\Omega^c$ with $|z_n|\rightarrow\infty$ as $n\rightarrow\infty$ such that 
$$\int_{B_r(z_n)\cap\Omega^c}|u_n|^2dx\geq\beta~\forall n\in\mathbb{N}.$$
Then for each fixed $T>0$, there exists $n_0=n_0(T)\in\mathbb{N}$ such that
$$B(0,T)\subset\mathbb{R}^N\setminus(\Omega-z_n),~\forall n\geq n_0.$$
Let $w_n(x)=u_n(x+z_n)$ for $x\in\Omega$ and as $w_n(x)=u_n(x)$ for $x\in \mathbb{R}^N\setminus\Omega$. This defines $w_n$ in the entire $\mathbb{R}^N$. Then we have for some subsequence of $(w_n)$, still denoted as $(w_n)$, which is bounded in $H^s(B(0,T))$ for all $T>0$. This is because $(u_n)$ is bounded in $H_{\Omega^c}^s$ and therefore there exists a positive constant $C$ such that 
\begin{align}\label{cond_13}
\begin{split}
C&\geq\iint_{\mathbb{R}^{2N}\setminus\Omega^2}\frac{|u_n(x)-u_n(y)|^2}{|x-y|^{N+2s}}dydx+\int_{\mathbb{R}^N\setminus\Omega}|u_n|^2dx\\
&=\iint_{\mathbb{R}^{2N}\setminus(\Omega-z_n)^2}\frac{|w_n(x)-w_n(y)|^2}{|x-y|^{N+2s}}dydx+\int_{\mathbb{R}^N\setminus(\Omega-z_n)}|w_n|^2dx\\
&\geq\iint_{B(0,T)\times B(0,T)}\frac{|w_n(x)-w_n(y)|^2}{|x-y|^{N+2s}}dydx+\int_{B(0,T)}|w_n|^2dx=\|w_n\|_{H^s(B(0,T))}^2.
\end{split}
\end{align}
This suggests that there exists a subsequence of $(w_n)$, still denoted as $(w_n)$, and a $v\in H_{\text{loc}}^s(\mathbb{R}^N)$ such that 
$w_n\rightharpoonup w$ in $H^s(B(0,T))$ as $n\rightarrow\infty$.\\
Further, by the lower semicontinuity of the norm we have 
$$\|w\|_{H^s(B(0,T))}\leq \underset{n\rightarrow\infty}{\liminf}\|w_n\|_{H^s(B(0,T))}\leq C$$ for every $T>0$. It follows from this that $w\in H^s(\mathbb{R}^N)$.\\
Let $\varphi\in H^s(\mathbb{R}^N)$ be a test function with bounded support. Since, $\bar{I}'(u_n)=o_n(1)$,
\begin{align}\label{cond_14}
\langle\bar{I}'(u_n),\varphi(\cdot-z_n)\rangle&=o_n(1).
\end{align}
Hence, 
\begin{align}\label{cond_15}
\begin{split}
\frac{1}{2}\iint_{\mathbb{R}^N\setminus(\Omega-z_n)^2}\frac{(w_n(x)-w_n(y))(\varphi(x)-\varphi(y))}{|x-y|^{N+2s}}dydx+\int_{\mathbb{R}^N\setminus(\Omega-z_n)}w_n\varphi dx\\=\int_{\mathbb{R}^N\setminus(\Omega-z_n)}P(x+z_n)|w_n|^{p-1}w_n\varphi dx+\lambda\int_{\mathbb{R}^N\setminus(\Omega-z_n)}|w_n|^{-\gamma-1}w_n\varphi dx.
\end{split}
\end{align}
By the weak convergence of $w_n$ to $w$ in $H^s(B(0,T))$, we realize that
\begin{align}\label{cond_15'}
\begin{split}
\frac{1}{2}\iint_{\mathbb{R}^{2N}\setminus(\Omega-z_n)^2}\frac{(w_n(x)-w_n(y))(\varphi(x)-\varphi(y))}{|x-y|^{N+2s}}dydx+\int_{\mathbb{R}^N\setminus(\Omega-z_n)}w_n\varphi dx\\
\rightarrow \frac{1}{2}\iint_{\mathbb{R}^{2N}}\frac{(w(x)-w(y))(\varphi(x)-\varphi(y))}{|x-y|^{N+2s}}dydx+\int_{\mathbb{R}^N}w\varphi dx~\text{as}~n\rightarrow\infty.
\end{split}
\end{align}
Now suppose $|z_n|\rightarrow\infty$, then we have
$$P(x+z_n)\rightarrow\tilde{P}~\text{a.e.}~x\in\mathbb{R}^N,~\text{as}~n\rightarrow\infty$$
and therefore 
$$P(x+z_n)|w_n|^{p-1}w_n\varphi\rightarrow\tilde{P}|w|^{p-1}w\varphi~\text{a.e.}~x\in\mathbb{R}^N,~\text{as}~n\rightarrow\infty.$$
Further,
$$|w_n|^{-\gamma-1}w_n\varphi\rightarrow|w|^{-\gamma-1}w\varphi~\text{a.e.}~x\in\mathbb{R}^N,~\text{as}~n\rightarrow\infty.$$
These limits in combination with the boundedness $(w_n)$ in $L^{p+1}(\mathbb{R}^N\setminus\Omega)$ permits us to apply \cite{27}, 4.6 to obtain
\begin{align}\label{cond_16}
\int_{B(0,T)}P(x+z_n)|w_n|^{p-1}w_n\varphi dx\rightarrow\int_{B(0,T)}\tilde{P}|w|^{p-1}w\varphi dx.
\end{align}
Note that since $w_n\rightharpoonup w$ as $n\rightarrow\infty$, we have by the compact embedding given in Lemma \ref{emb}; $(4)$ that $w_n\rightarrow w$ in $L^{p+1}(\mathbb{R}^N)$. Further, since $\varphi$ is bounded and with a bounded support, hence 
$$|w_n|^{p-\gamma-1}w_n\varphi \rightarrow|w|^{-\gamma-1}w\varphi dx$$
and therefore
\begin{align}\label{cond_17}
\int_{B(0,T)}|w_n|^{p-\gamma-1}w_n\varphi dx\rightarrow\int_{B(0,T)}|w|^{-\gamma-1}w\varphi dx.
\end{align}
Note that, this also implies that $w$ cannot be zero over a subset (of $\mathbb{R}^N$) of non-zero measure (refer Appendix). Thus, \eqref{cond_15}-\eqref{cond_17} gives
$$\langle\bar{I}_{\infty}'(w),\varphi\rangle=0.$$
Now by density we extend our test function space to $H^s(\mathbb{R}^N)$ and the last equality gives that $w$ is a nontrivial solution of $(P_{\infty})$. On computing the following
\begin{align}\label{cond_18}
\begin{split}
c_{\infty}&\leq\bar{I}_{\infty}(w)-\frac{1}{2}\langle\bar{I}_{\infty}'(w),w\rangle\\
&=\left(\frac{1}{2}-\frac{1}{p+1}\right)\int_{\mathbb{R}^N}\tilde{P}|w|^{p+1}dx+\lambda\left(\frac{1}{2}-\frac{1}{1-\gamma}\right)\int_{\mathbb{R}^N}|w|^{1-\gamma}dx\\
&\leq\left(\frac{1}{2}-\frac{1}{p+1}\right)\int_{\mathbb{R}^N}\tilde{P}|w|^{p+1}dx\\
&\leq\underset{n\rightarrow\infty}{\liminf}\left(\frac{1}{2}-\frac{1}{p+1}\right)\int_{\mathbb{R}^N\setminus(\Omega-z_n)}P(x+z_n)|w_n|^{p+1}dx\\
&=\underset{n\rightarrow\infty}{\liminf}\left(\frac{1}{2}-\frac{1}{p+1}\right)\int_{\mathbb{R}^N\setminus\Omega}P(x)|u_n|^{p+1}dx\\
&=\underset{n\rightarrow\infty}{\liminf}(\bar{I}(u_n)-\frac{1}{2}\langle\bar{I}'(u_n),u_n\rangle)=c_1
\end{split}
\end{align}
we get a violation of the Lemma \ref{Lions_type}. Thus $u\neq 0$. As seen earlier, we have that $\bar{I}(|u|)=\bar{I}(u)$ which yields $u\geq 0$ a.e. in $\Omega^c$. However, by the Appendix we conclude that $u>0$ a.e. in $\Omega^c$. This proves the existence of a positive ground state solution to \eqref{main}.
\end{proof}
\noindent{\it Existence of infinitely many solutions}:~
\noindent We now give the symmetric mountain pass theorem \cite{col}.
\begin{theorem}\label{symmMPT}
	(Symmetric mountain pass theorem) Let $X$ be an infinite dimensional Banach space. $Y$ is a finite dimensional Banach space and $X=Y \bigoplus Z$. For any $c>0$ if $I\in C^1(X,\mathbb{R})$ satisfies $(Ce)_c$ and 
	\begin{enumerate}
		\item $I$ is even and $I(0)=0$ for all $u\in X$ 
		\item There exists $r>0$ such that $I(u)\geq R$ for all $u\in B_r(0)=\{u\in X:\|u\|_X\leq r\}$
		\item For any finite dimensional subspace $\bar{X}\subset X$, there exists $r_0=r(\bar{X})>0$ such that $I(u)\leq 0$ on $\bar{X}\setminus B_{r_0}(0_{\bar{X}})$, where $0_{\bar{X}}$ is the null vector in $\bar{X}$
	\end{enumerate}
	then there exists an unbounded sequence of critical values of $I$ characterized by a minimax argument.
\end{theorem}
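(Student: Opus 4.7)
The plan is to invoke the classical minimax machinery based on the Krasnoselskii genus, adapted to even $C^1$ functionals satisfying a Cerami-type compactness condition. First I would introduce the family $\Sigma$ of closed symmetric subsets of $X\setminus\{0\}$ together with the Krasnoselskii genus $\gamma:\Sigma\to\mathbb{N}\cup\{\infty\}$, and recall its standard properties: monotonicity under inclusion, subadditivity, invariance under odd continuous maps, and the normalization $\gamma(S_\rho\cap V)=\dim V$ for every finite-dimensional subspace $V$ of $X$ and every radius $\rho>0$.

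For each integer $k\geq 1$ I would define the minimax class
\begin{equation*}
\Gamma_k=\{A\subset X:\, A\text{ is compact, symmetric, }0\notin A,\text{ and }\gamma(A)\geq k\}
\end{equation*}
and the candidate critical levels
\begin{equation*}
c_k=\inf_{A\in\Gamma_k}\sup_{u\in A}I(u).
\end{equation*}
Hypothesis (3) applied to any $k$-dimensional subspace $\bar X$ supplies a compact symmetric set inside $\bar X$ of genus $k$ on which $I$ is bounded above, so $\Gamma_k\neq\emptyset$ and $c_k<\infty$. Hypothesis (2), together with the intersection property of the genus (any $A\in\Sigma$ of genus larger than $\dim Y$ must meet $Z$ and hence cannot avoid the sphere $\{\|u\|=r\}$ while keeping its genus), yields $c_k\geq R$ once $k>\dim Y$, so the sequence is bounded below by the positive level $R$.

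The crux of the argument is to prove $c_k\to\infty$. Supposing the contrary, there exists $M$ with $c_k\leq M$ for all $k$; the Cerami condition on sublevels then forces the critical set $K^M=\{u\in X:I'(u)=0,\,R\leq I(u)\leq M\}$ to be compact, and since $I$ is even, $K^M\in\Sigma$ with $\gamma(K^M)<\infty$. Fixing $k_0>\gamma(K^M)+\dim Y$ and a near-minimizer $A\in\Gamma_{k_0}$ at level $c_{k_0}$, I would choose a small symmetric open neighborhood $N$ of $K^M$ with $\gamma(\overline{N})=\gamma(K^M)$; the equivariant deformation lemma (available because the evenness of $I$ permits an odd pseudo-gradient flow which preserves $\Sigma$) deforms $A\setminus N$ to a set with $\sup I<c_{k_0}$ whose genus is still at least $k_0-\gamma(\overline{N})>\dim Y$, contradicting the definition of $c_{k_0-\gamma(\overline{N})}$.

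Finally, each $c_k$ is shown to be a critical value by the same deformation argument: if $c_k$ were regular, the equivariant deformation lemma would push a near-minimizing set in $\Gamma_k$ strictly below $c_k$, contradicting the infimum. I expect the divergence $c_k\to\infty$ to be the main obstacle, since this is where the splitting $X=Y\oplus Z$ and the finite dimensionality of $Y$ genuinely enter the argument; the remaining ingredients are the standard Krasnoselskii--Rabinowitz genus-minimax machinery, once the Cerami condition supplies the compactness required to run the equivariant flow.
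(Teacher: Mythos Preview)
The paper does not supply a proof of this statement at all: Theorem~\ref{symmMPT} is quoted from the literature (the reference \cite{col}) and is used as a black box in the subsequent argument. So there is no ``paper's own proof'' to compare against.

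Your proposal sketches the classical Ambrosetti--Rabinowitz genus argument, which is indeed the route taken in the original sources for the symmetric mountain pass theorem, and the overall architecture (genus, the minimax classes $\Gamma_k$, the equivariant deformation lemma under a Cerami-type condition, the finite-genus obstruction to boundedness of the $c_k$) is correct. One point deserves care: your justification that $c_k\geq R$ for $k>\dim Y$ relies on the intersection property ``genus $>\dim Y$ forces $A$ to meet $Z$,'' but to extract the lower bound you really need that every such $A$ meets the set where hypothesis (2) gives the bound. As the theorem is stated here, hypothesis (2) is on the whole ball $B_r(0)$ (and is therefore compatible only with $R\leq 0$, since $I(0)=0$); in the standard formulation the bound is on $\partial B_r\cap Z$, and the correct argument uses that a compact symmetric set of genus $>\dim Y$ cannot be odd-retracted away from $\partial B_r\cap Z$. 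You should flag that the hypothesis as written is slightly off and work with the intended version.
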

\noindent{\it $(PS)$ condition is satisfied by $\bar{I}$}:~Suppose $(u_n)$ is a sequence such that $\bar{I}(u_n)\rightarrow c$ and $\bar{I}'(u_n)\rightarrow 0$ as $n\rightarrow\infty$. This implies that $(u_n)$ is bounded in $H_{\Omega^c}^s$. Therefore there exists a subsequence, still denoted by $(u_n)$, such that 
\begin{align}\label{conv1}
\begin{split}
& u_n\rightharpoonup u~\text{in}~H_{\Omega^c}^s,\\
& u_n\rightarrow u~\text{in}~L^r(\Omega^c), 1\leq r< 2_s^*.
\end{split}
\end{align}   
Consider the following. 
\begin{align}\label{cond_19}
\begin{split}
o(1)&=\langle\bar{I}'(u_n),u_n-u\rangle\\
&=\langle u_n,u_n-u\rangle-\int_{\Omega^c}P(x)|u_n|^{p-1}u_n(u_n-u)dx-\lambda\int_{\Omega}|u_n|^{-\gamma-1}u_n(u_n-u)dx\\
&=\|u_n\|^2-\|u\|^2+o(1)
\end{split}
\end{align}
where we have also used the outcome of the Appendix. Therefore $u_n\rightarrow u$ in $H_{\Omega^c}^s$.\\
Let us develope some prerequisites. It is well known that if $X$ is a Banach space then we have that 
$$X=\underset{i\geq 1}{\bigoplus}X_i$$
where $X_{i}=\text{span}\{e_j\}_{j\geq i}$. Define
$$Y_m=\underset{1\leq j \leq m}{\bigoplus}X_j$$
$$Z_m=\underset{j\geq m}{\bigoplus}X_j.$$
Clearly, $Y_m$ is a finite dimensional subspace of $X$, for each $m$. Let $X=H_{\Omega^c}^s$.
By the equivalence of norm in $Y_m$, we have
\begin{align}\label{conv24}
\begin{split}
\bar{I}(u)&=\frac{1}{2}\|u\|_{H_{\Omega^c}^s}^2-\frac{\lambda }{1-\gamma}\int_{H_{\Omega^c}^s}|u|^{1-\gamma}dx-\frac{1}{p+1}\int_{H_{\Omega^c}^s}P(x)|u|^{p+1}dx\\
&\leq\frac{1}{2}\|u\|_{H_{\Omega^c}^s}^2-\frac{\lambda C_3}{1-\gamma}\|u\|_{H_{\Omega^c}^s}^{1-\gamma}-\frac{\tilde{P}C_4}{p+1}\|u\|_{H_{\Omega^c}^s}^{p+1}\leq 0
\end{split}
\end{align}
Thus for any finite dimensional subspace $\bar{X}\subset H_{\Omega^c}^s$, there exists a sufficiently large $r_0=r(\bar{X})$ for which we have $\bar{I}(u)\leq 0$ whenever $\|u\|\geq r_0$. Hence, by the Theorem \ref{symmMPT}, there exists an unbounded sequence of critical values of $\bar{I}$ characterized by a minimax argument. In other words, from the Remark \ref{key_obs} the problem in \eqref{main_mod} has infinitely many solutions and hence the problem \eqref{main} also has infinitely many solutions.
\subsection{Boundedness of any solution to \eqref{main}}
\noindent The idea carved out here is a usual one that appears in most literatures and hence we will only sketch that an improvement in the integrability is possible upto $L^{\infty}$ assuming an integrability of certain order, say $p$. The boundedness will follow from a {\it bootstrap} argument. Without loss of generality we consider the set $\Omega'=\{x\in\Omega^c:u(x)>1\}$ and thus from the positivity of a fixed solution, say, $u$ we have $u=u^+>0$ a.e. in $\Omega$. Let $u\in L^{\beta}(\Omega^c)$ for $\beta>1$. On testing with $u^{\beta}$ to obtain the following.	
\begin{align}\label{bddness}
\begin{split}
&\frac{1}{2}\langle u,u^{\beta}\rangle+\int_{\Omega^c}u^{\beta+1}dx\\
&=\left(\lambda\int_{\Omega'}|u|^{\beta-\gamma}dx+\int_{\Omega'}|u|^{p-1+\beta}dx\right)\frac{(\beta+1)^2}{4\beta}\\
&\leq\left(\lambda\int_{\Omega'}|u|^{\beta}(1+|u|^{p-1})dx\right)\frac{(\beta+1)^2}{4\beta};~\text{since in}~\Omega'~\text{we have}~u>1\\
&\leq\left(\lambda\int_{\Omega'}|u|^{\beta}|u|^{p}dx\right)\frac{(\beta+1)^2}{4\beta}\\
&\leq\lambda \beta C''\|u\|_{\alpha^*}^{p}\|u^{\beta}\|_t;~\text{since by using the H\"{o}lder's inequality}.
\end{split}
\end{align}
Here $t=\frac{\alpha^*}{\alpha^*-p}$ for some $\alpha^*>1$, $t^*=\frac{tN}{N-ts}<2_{s^-}^*$. We further have 
\begin{align}\label{bddness1}
\begin{split}
C'\|u^{\frac{\beta}{2}}\|_{\alpha^*}^2&\leq C'\|u^{\frac{\beta+1}{2}}\|_{\alpha^*}^2\\
&\leq \int_{\Omega'}\frac{\left|u(x)^{\frac{(\beta+1)}{2}}-u(y)^{\frac{(\beta+1)}{2}}\right|^2}{|x-y|^{N+2s}}dxdy.
\end{split}
\end{align}
So, from \eqref{bddness} and \eqref{bddness1}, the story so far is as follows.
\begin{align}
C'\|u^{\frac{\beta}{2}}\|_{\alpha^*}^2&\leq \lambda \beta C''\|u\|_{\alpha^*}^{p}\|u^p\|_t.
\end{align}
For the fixed $\alpha^*>1$ so chosen, we set $\eta=\frac{\alpha^*}{2t}>1$ for a suitable choice of $t$ and $\tau=t\beta$ to get
\begin{align}\label{moser1}
\|u\|_{\eta\tau}&\leq (\beta C)^{t/\tau}\|u\|_{\tau};~\text{where}~C=\lambda  C''\|u\|_{\alpha^*}^{p}~\text{is a fixed  quantity for a fixed solution}~u.\\
\end{align}
Let us now iterate with $\tau_0=t$, $\tau_{n+1}=\eta\tau_n=\eta^{n+1}t$. After $n$ iterations, the inequality \eqref{moser1} yields
\begin{align}\label{moser2}
\|u\|_{\tau_{n+1}}&\leq C^{\sum\limits_{i=0}^{n}\frac{t}{\tau_i}}\prod\limits_{i=0}^{n}\left(\frac{\tau_i}{t}\right)^{\frac{t}{\tau_i}}\|u\|_t.
\end{align}
By using the fact that $\eta>1$ and the iterative scheme, i.e. $\tau_0=t$, $\tau_{n+1}=\eta\tau_n=\eta^{n+1}t$,
we get $$\sum\limits_{i=0}^{\infty}\frac{t}{\tau_i}=\sum\limits_{i=0}^{\infty}\frac{1}{\eta^i}=\frac{\eta}{\eta-1}$$ and 
$$\prod\limits_{i=0}^{\infty}\left(\frac{\tau_i}{t}\right)^{\frac{t}{\tau_i}}=\eta^{\frac{\eta^2}{(\eta-1)^2}}.$$
Therefore, on passing the limit $n\rightarrow\infty$ in \eqref{moser2}, we obtain 
\begin{align}\label{moser3}
\|u\|_{\infty}&\leq C^{\frac{\eta}{\eta-1}}\eta^{\frac{\eta^2}{(\eta-1)^2}}\|u\|_t.
\end{align}
Thus $u\in L^{\infty}(\Omega^c)$.
\section{Appendix}
\noindent We now claim that
\begin{align}\label{conv sing}
\lim\limits_{n\rightarrow+\infty}\int_{\Omega^c}|u_n|^{-\gamma-1}u_n vdx=\int_{\Omega^c}|u|^{-\gamma-1}u vdx<\infty~\text{for}~v~\text{bounded support}.
\end{align}
{\bf{Proof of the claim:}}
\noindent Let us denote the set $A_n=\{x\in\Omega^c:u_n(x)=0\}$. Since $v u_n^{-\gamma}\in L^1(\Omega^c)$, we have that the Lebesgue measure of $A_n$ is zero, i.e. $|A_n|=0$. Thus by the sub-additivity of the Lebesgue measure we have, $|\bigcup A_n|=0$. Let $x\in\text{supp}(v)\setminus D$ such that $u(x)=0$. Here $|D|<\delta$ - obtained from the Egorov's theorem - where $u$ is a uniform limit of (a subsequence of $\{u_n\}$, still denoted as $\{u_n\}$) $u_n$ in $\text{supp}(v)\setminus D$. Further, define $$A_{m,n}=\{x\in\text{supp}(v)\setminus D:|u_n(x)|<\frac{1}{m}\}.$$
Note that due to the uniform convergence, for a fixed $n$ we have $|A_{m,n}|\rightarrow 0$ as $m\rightarrow\infty$. Now consider 
$$\underset{m,n\in\mathbb{N}}{\bigcup}A_{m,n}=\underset{n\geq 1}{\bigcup}\underset{m\geq n}{\bigcap}A_{m,n}.$$
Observe that, for a fixed $n$, $$|\underset{m\geq n}{\bigcap}A_{m,n}|=\underset{m\rightarrow\infty}{\lim}A_{m,n}=0.$$
The above argument is true for each fixed $n$ and thus $$|\underset{m,n\in\mathbb{N}}{\bigcup}A_{m,n}|=0.$$
Therefore, $|\{x\in\text{supp}(v)\setminus D: u_n(x)\rightarrow u(x)=0\}|=0$. Hence $v\neq 0$ a.e. in $\text{supp}(v)$. Repeating this process by considering different test functions with increasing size of supports in $\Omega^c$ gives the same outcome. Hence $w\neq 0$ a.e. $\Omega^c$.

\section{References}

\end{document}